\documentclass[12pt,reqno]{amsart}

\usepackage{graphicx,subfigure}
\usepackage{hyperref}
\hypersetup{colorlinks=true, citecolor=blue, linkcolor=red}

%For better printing, comment out previous line and replace with
%\hypersetup{colorlinks=true, citecolor=black, linkcolor=black}

%\usepackage{showlabels}

\usepackage{appendix}

\numberwithin{equation}{section} \numberwithin{figure}{section}
\numberwithin{table}{section} \setlength{\oddsidemargin}{0in}
\setlength{\evensidemargin}{0in} \setlength{\textwidth}{6.5in}
\setlength{\topmargin}{-.3in} \setlength{\textheight}{9in}

{ \theoremstyle{plain}
\newtheorem{theorem}{Theorem}[section]
\newtheorem{proposition}[theorem]{Proposition}
\newtheorem{lemma}[theorem]{Lemma}
}

{ \theoremstyle{definition}
\newtheorem{definition}[theorem]{Definition}

\newtheorem{remark}[theorem]{Remark}

}

\begin{document}

\title[Large solutions to the Gelfand-Liouville problem]{Matched asymptotics for large solutions to the Gelfand-Liouville problem in two-dimensional, doubly connected domains}
%{Heteroclinic orbits in slow-fast Hamiltonian systems with slow
%manifold bifurcations}

%\author[Schecter]{Stephen Schecter}
%\address{
%Department of Mathematics \\
%North Carolina State University \\
%Box 8205 \\
%Raleigh, NC 27695 USA \\
%919-515-6533 } \email{schecter@math.ncsu.edu}

\author[Sourdis]{Christos Sourdis}
\address{
Department of mathematics\\
University of Ioannina \\
Ioannina, Greece.} \email{csourdis@tem.uoc.gr}

%\date{September 2, 2011}

%\subjclass{34E15, 34C37, 37G99}

%\keywords{geometric singular perturbation theory, blow-up, pitchfork
%bifurcation, transcritical bifurcation}

%\thanks{The research of S. S.  was supported in part by the National Science Foundation
%under grant DMS-0708386.}
%\thanks{The research of C. S. was supported in part by FONDECYT under grant 3085026.}
\begin{abstract}
In this paper we provide a formal matched asymptotic analysis for large solutions to the Gelfand-Liouville problem in planar, doubly connected domains in the plane. Using these, we rigorously construct a good approximate solution to the problem.
\end{abstract}

\maketitle \tableofcontents
\section{Introduction}
\subsection{The problem}
We consider the Gelfand-Liouville problem:
\begin{equation}\label{eqEqradialmain}
\left\{\begin{array}{ll}
  \Delta u+\lambda^2 e^u=0 & \textrm{in}\ \Omega, \\
   &  \\
  u=0 &\textrm{on}\ \partial\Omega,
\end{array}\right.
\end{equation}
where $\Omega$ is a bounded, smooth domain of $\mathbb{R}^N$,
$N\geq 2$, and $\lambda>0$ is a parameter.

\subsection{Motivation}
\subsubsection{Motivation from physics} This equation arises in the
theories of thermionic emission (see \cite{gelfand}), isothermal
gas spheres (see \cite{chadreskar}), and gas combustion (see
\cite{mignot}). Furthermore, this type of equations appear in
statistical mechanics (see \cite{stat1}, \cite{stat2},
\cite{statiChan}).

\subsubsection{Motivation from geometry} This type of equations
arise in the prescribed Gaussian and scalar curvature problems in
a compact manifold (see \cite{chang1}, \cite{chang2},
\cite{kazdan}).

\subsection{Know results} Problem (\ref{eqEqradialmain}) has been studied extensively. With no hope of being
complete, let us mention the following results which are more
related to the scope of the current paper.

\subsubsection{General $N\geq 2$} It is well known  that there
exists a $\lambda_*>0$ such that (\ref{eqEqradialmain}) admits a
minimal
 solution $\underline{u}_\lambda$ if $\lambda \in (0, \lambda_*)$ (here minimal means
 smallest);
no  solution if $\lambda> \lambda_*$; admits a solution if
$\lambda=\lambda_*$ and $N\leq 9$ (see \cite{crandal},
\cite{davila}, \cite{dupaigne} and \cite{mignot}). In fact, the
minimal solution $\underline{u}_\lambda$ can be constructed by the
method of upper and lower solutions and satisfies
\[
\|\underline{u}_\lambda \|_{L^\infty(\Omega)}\to 0\ \ \textrm{as}\
\ \lambda \to 0.
\]

The simply connectedness of the domain plays an important role in
the structure of solutions to (\ref{eqEqradialmain}). This can
already be made clear by looking at the case of radially symmetric
domains. In the case of a ball, a classical result of Gidas, Ni
and Nirenberg \cite{gidas} implies that every solution is radially
symmetric and decreasing (it goes without saying that every
solution in any domain is positive). Therefore, the problem
reduces to an ordinary differential equation. Based on this
observation, Joseph and Lundgren \cite{josephLudgren} were able to
completely characterize the
 structure of solutions in all  dimensions (see also \cite{gelfand} and \cite{mignatBall}). Of
particular interest is the relationship they observed between the
multiplicity of solutions and the space dimension. On the other
hand, in the case of an annulus, there exists a continuous curve
of radial solutions along which infinitely many  bifurcations to
non-radial solutions takes place (see \cite{lin1}, \cite{lin2},
\cite{pacardAnn}). The aforementioned radial solutions are
critical points of mountain-pass type  for the associated energy
in the natural energy space of radially symmetric functions (see
\cite{Grossi-asymptotic analysis}). In fact, if $\lambda>0$ is
sufficiently small,  problem (\ref{eqEqradialmain}) has exactly
two radial solutions, namely the minimal one and the mountain-pass
in the class of radial solutions (see \cite{lin1} and
\cite{nagasaki-suzuki}). In particular, when $N=2$, these
solutions can be given explicitly by using their invariance
through a transformation group (see \cite{gelfand}, \cite{lin1}).
\subsubsection{Behaviour as $\lambda \to 0^+$}
\textbf{\underline{$N=2$.}} In this case, thanks to the works
\cite{brezisMerle}, \cite{lishaffrir}, \cite{nagasaki-suzuki} and
\cite{suzukiProc}, we can classify all possible solutions to
(\ref{eqEqradialmain}) by the limit of the quantity
\[
\mathcal{T}_\lambda=\lambda^2\int_{\Omega}^{}e^udx.
\]
Loosely speaking, if $T_\lambda$ remains bounded as $\lambda \to 0^+$ then the solutions blow-up at a finite number of points in $\Omega$. More precisely, they exhibit ''bubbling" behaviour at a fixed finite number of points in $\Omega$: after a proper rescaling near each such point, the solution resembles the unique solution of the following limit problem:
\[
\Delta u+e^u=0\ \ \textrm{in}\ \ \mathbb{R}^2,\ \ \int_{\mathbb{R}^2}e^udx<\infty.
\]
\textbf{\underline{$N\geq 2$.}} In any dimension, the asymptotic behaviour of radial mountain-pass solutions in an annulus was investigated
in \cite{Grossi-asymptotic analysis}. It was shown there in that $\mathcal{T}_\lambda$ diverges to infinity as $\lambda \to 0$. Moreover, the solutions blow-up in the whole of $\Omega$ and in fact concentrate their energy around a special hyper-sphere. After a proper rescaling, the solutions resemble that of
the unique one-dimensional bubble
\begin{equation}\label{eq1d}
 u''+e^u=0\ \ \textrm{in}\ \ \mathbb{R},\ \ \int_{\mathbb{R}}e^udx<\infty,
\end{equation}
transplanted along that hyper-sphere.

\subsection{The problem and the main result}
It would be of great interest to see if there exist in general domains  analogous solutions to the radial ones of \cite{Grossi-asymptotic analysis}.
In this paper we make a modest step in this direction by carrying out successfully matched asymptotic expansions for the problem in the simplest possible nonradial situation: In the case where $\Omega$ is two-dimensional doubly connected domain in the plane. We would like to point out that, to the best of our knowledge, our calculations are new even in the radial case (there they carry out considerably more easily and in any dimension).

In our main result, stated in Proposition \ref{proRemainderGlobal} below, we use these asymptotic expansions to construct a good global approximate solution to (\ref{eqEqradialmain}) for small $\lambda>0$, which blows up in the whole of $Omega$ and concentrates its energy along a special curve in $\Omega$.
Near that curve, and in the normal direction to it, that approximate solution given to main order by a properly rescaled solution of (\ref{eq1d}). Moreover, using our detailed estimates, one can also calculate an asymptotic expansion for $\mathcal{T}_\lambda$ as $\lambda \to 0^+$.

It is natural to believe that there exists a genuine solution of (\ref{eqEqradialmain}) near the approximate one.
In the radial case, this can be made rigorous by a linearization argument in some carefully chosen weighted spaces and expanding on some ideas from
\cite{AS}. However, in the nonradial setting at hand there are various issues which prevent a straightforward application of the techniques in the aforementioned reference. Further, in light of our previous discussion on nonradial bifurcation, a new difficulty is also expected to occur by the presence of subtle resonance phenomena (see also \cite{delPistoia}).

\section{The curve $\gamma$ and its harmonic measures $W_\gamma^\pm$}
\begin{proposition}\label{proW}
There exists a smooth closed Jordan curve $\gamma$ in $\Omega$,
dividing $\Omega$ in two open domains $\Omega^+$ (the outer) and
$\Omega^-$ (the inner), with the following property:

Suppose that $u^\pm$ satisfy classically
\begin{equation}\label{eqVpm}
\left\{\begin{array}{ll}
  \Delta u^\pm=0 &\textrm{in}\ \ \Omega^\pm,  \\
    &   \\
   u^\pm =0& \textrm{on}\ \ \partial\Omega^\pm\cap \partial \Omega,
\end{array}\right.
\end{equation}
and
\begin{equation}\label{eqkfermi}
\partial_t^ku^+=(-1)^k\partial_t^ku^-\ \ \textrm{on}\ \ \gamma,
\end{equation}
for some integer $k\in \{0,1 \}$; where $\partial_t$ denotes
derivation in the direction perpendicular to the curve $\gamma$
(see (\ref{eqFermi}) below). Then, it holds that
\begin{equation}\label{eq22+}
\partial_t^{1-k}u^+=(-1)^{1-k}\partial_t^{1-k}u^-\ \ \textrm{on}\ \
\gamma.
\end{equation}
\end{proposition}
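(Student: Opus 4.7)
My plan is to produce $\gamma$ by the classical conformal uniformization of doubly connected planar domains, and to derive (\ref{eq22+}) from an anti-holomorphic involution of $\Omega$ that fixes $\gamma$ pointwise. By the conformal representation theorem, there is a conformal equivalence $\Phi:\Omega\to A$ onto an annulus $A=\{r_1<|z|<r_2\}$ sending the outer component of $\partial\Omega$ to $\{|z|=r_2\}$ and the inner to $\{|z|=r_1\}$; smoothness of $\partial\Omega$ lets $\Phi$ extend smoothly to $\overline{\Omega}$. I then set
\[
\gamma:=\Phi^{-1}\bigl(\{|z|=R_*\}\bigr),\qquad R_*:=\sqrt{r_1 r_2},
\]
which is a smooth Jordan curve dividing $\Omega$ into its outer part $\Omega^+$ and inner part $\Omega^-$, as required.

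The Euclidean inversion $J(z)=R_*^2/\bar z$ is an anti-holomorphic involution of $A$ that fixes $\{|z|=R_*\}$ pointwise, interchanges the two subannuli, and swaps the two boundary circles; in local coordinates $z=R_*e^{s+i\theta}$ it reads $(s,\theta)\mapsto(-s,\theta)$, which identifies $dJ$ along the fixed circle as orthogonal reflection across the tangent line. Therefore $\sigma:=\Phi^{-1}\circ J\circ\Phi$ is an anti-holomorphic involution of $\Omega$ that fixes $\gamma$ pointwise, satisfies $\sigma(\Omega^\pm)=\Omega^\mp$ and $\sigma(\partial\Omega^+\cap\partial\Omega)=\partial\Omega^-\cap\partial\Omega$, and whose differential at every $p\in\gamma$ is the Euclidean reflection across the tangent to $\gamma$ at $p$; this last property uses that $\Phi$ is conformal, hence angle-preserving, near $\gamma$.

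Given $u^+$ satisfying (\ref{eqVpm}) in $\Omega^+$, I define $\tilde u^{-}:=u^+\circ\sigma$ on $\Omega^-$. Because $\sigma$ is anti-holomorphic, $\tilde u^-$ is harmonic in $\Omega^-$; because $\sigma$ carries $\partial\Omega^-\cap\partial\Omega$ onto $\partial\Omega^+\cap\partial\Omega$, $\tilde u^-$ vanishes on the inner boundary; and because $d\sigma$ negates the normal to $\gamma$ while fixing the tangent,
\[
\tilde u^-|_\gamma=u^+|_\gamma\qquad\text{and}\qquad\partial_t\tilde u^-|_\gamma=-\partial_t u^+|_\gamma.
\]
For $k=0$, the first of these combined with (\ref{eqkfermi}) gives $\tilde u^-=u^-$ on $\gamma$; since both functions are harmonic in $\Omega^-$ and vanish on the inner boundary, Dirichlet uniqueness forces $\tilde u^-\equiv u^-$, and taking $\partial_t$ on $\gamma$ yields (\ref{eq22+}). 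For $k=1$, the second relation combined with (\ref{eqkfermi}) gives $\partial_t\tilde u^-=\partial_t u^-$ on $\gamma$, and uniqueness for the mixed Dirichlet--Neumann problem in $\Omega^-$ (Dirichlet zero on the inner boundary, equal Neumann data on $\gamma$) again forces $\tilde u^-\equiv u^-$, whence (\ref{eq22+}) follows by restricting to $\gamma$. The only nontrivial technical step is the existence and smooth boundary extension of $\Phi$, for which I would cite the standard uniformization of doubly connected planar regions; the rest is bookkeeping for $\sigma$ plus one invocation of uniqueness, and I do not foresee any substantive obstacle.
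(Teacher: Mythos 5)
Your construction of $\gamma$ is exactly the paper's: conformally uniformize $\Omega$ onto an annulus and take the preimage of the geometric-mean circle $\{|z|=\sqrt{r_1r_2}\}$. The difference lies entirely in how the two of you extract the conclusion. The paper stays concrete: it writes the general harmonic function on the annulus vanishing on one boundary circle as an explicit Fourier series, shows that hypothesis (\ref{eqkfermi}) forces the coefficient relations $A_0^-=A_0^+$, $A_n^-=-A_n^+$, $B_n^-=-B_n^+$, reads (\ref{eq22+}) off the series, and then transports the conclusion to the general domain by a chain-rule computation relying on the orthogonality of $Df$. You instead encode the same symmetry once and for all as the anti-conformal involution $\sigma=\Phi^{-1}\circ J\circ\Phi$ with $J(z)=R_*^2/\bar z$, observe that $\sigma$ fixes $\gamma$ pointwise with differential equal to reflection across the tangent, and deduce (\ref{eq22+}) from a single uniqueness statement for the harmonic function $u^+\circ\sigma-u^-$ in $\Omega^-$ (Dirichlet uniqueness when $k=0$, the energy-identity uniqueness for the mixed Dirichlet--Neumann problem when $k=1$). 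Your argument is correct, and it is the cleaner route: it avoids the series computation and the separate ``transplant'' step entirely, at the modest cost of invoking the standard mixed-boundary-value uniqueness. In effect you are making explicit the Schwarz-reflection structure that the paper's Fourier computation implicitly verifies coefficient by coefficient.
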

 \begin{proof}
Since $\Omega$ is doubly connected, by Theorem 5.10h in
\cite{henrici} (see also \cite[Chpt. 17]{henrici3}), there exists
a one-to-one transformation $w=f(z)$ that maps $\Omega$
conformally onto the annulus \[\mathcal{A}=\{a<r<b\},\] where
$r=|w|$, for some $a,b>0$. Here we have identified $\mathbb{R}^2$
with the complex plane. In fact, the ratio $b/a$ is unique.
Furthermore, without loss of generality, we may assume that
$\partial \Omega^+$ maps onto $r=b$. We will first verify the
assertions of the proposition for the case of the annulus,
exploiting that the corresponding solutions can be represented
explicitly, and then argue that they continue to hold for the
general case by transplanting back.

  In the ``ideal'' case where $\Omega=\mathcal{A}$ and
$\gamma$ is the circle \[\mathcal{C}=\{r=\sqrt{ab}\}\] (this is
not a guess but is based on the analysis in
\cite{Grossi-asymptotic analysis}), we have the following explicit
formulas for the solutions $V^\pm$ of (\ref{eqVpm}) (see
\cite[Chpt. 15]{henrici3} or \cite[Chpt. 6]{strauss}):
\[
V^-(r,\theta)=A_0^-
\ln\left(\frac{r}{a}\right)+\sum_{n=1}^{\infty}A_n^-\left[\left(\frac{r}{a}
\right)^n-\left(\frac{r}{a} \right)^{-n} \right]\cos(n\theta)+
\sum_{n=1}^{\infty}B_n^-\left[\left(\frac{r}{a}
\right)^n-\left(\frac{r}{a} \right)^{-n} \right]\sin(n\theta),
\]
$a\leq r \leq\sqrt{ab}$, $-\pi\leq\theta\leq \pi$, and
\[
V^+(r,\theta)=A_0^+
\ln\left(\frac{b}{r}\right)+\sum_{n=1}^{\infty}A_n^+\left[\left(\frac{r}{b}
\right)^n-\left(\frac{r}{b} \right)^{-n} \right]\cos(n\theta)+
\sum_{n=1}^{\infty}B_n^+\left[\left(\frac{r}{b}
\right)^n-\left(\frac{r}{b} \right)^{-n} \right]\sin(n\theta),
\]
$\sqrt{ab}\leq r\leq b$, $-\pi\leq\theta\leq \pi$, where $A_0^\pm$
and $A_n^\pm, B_n^\pm$, $n=1,\cdots$ are arbitrary constants. In
particular, we find that
\[
V^-(\sqrt{ab},\theta)=A_0^-
\ln\sqrt{\frac{b}{a}}+\sum_{n=1}^{\infty}A_n^-\left[\left(\frac{b}{a}
\right)^{\frac{n}{2}}-\left(\frac{b}{a} \right)^{-\frac{n}{2}}
\right]\cos(n\theta)+
\sum_{n=1}^{\infty}B_n^-\left[\left(\frac{b}{a}
\right)^{\frac{n}{2}}-\left(\frac{b}{a} \right)^{-\frac{n}{2}}
\right]\sin(n\theta),
\]
\[
V^+(\sqrt{ab},\theta)=A_0^+
\ln\sqrt{\frac{b}{a}}+\sum_{n=1}^{\infty}A_n^+\left[\left(\frac{a}{b}
\right)^\frac{n}{2}-\left(\frac{a}{b} \right)^{-\frac{n}{2}}
\right]\cos(n\theta)+
\sum_{n=1}^{\infty}B_n^+\left[\left(\frac{a}{b}
\right)^\frac{n}{2}-\left(\frac{a}{b} \right)^{-\frac{n}{2}}
\right]\sin(n\theta),
\]
and
\[
V_r^-(\sqrt{ab},\theta)=\frac{A_0^-}{\sqrt{ab}}
+\sum_{n=1}^{\infty}\frac{nA_n^-}{a}\left[\left(\frac{b}{a}
\right)^{\frac{n-1}{2}}+\left(\frac{b}{a} \right)^{-\frac{n+1}{2}}
\right]\cos(n\theta)+
\sum_{n=1}^{\infty}\frac{nB_n^-}{a}\left[\left(\frac{b}{a}
\right)^{\frac{n-1}{2}}+\left(\frac{b}{a} \right)^{-\frac{n+1}{2}}
\right]\sin(n\theta),
\]
\[
V_r^+(\sqrt{ab},\theta)=-\frac{A_0^+}{\sqrt{ab}}
+\sum_{n=1}^{\infty}\frac{nA_n^+}{b}\left[\left(\frac{a}{b}
\right)^\frac{n-1}{2}+\left(\frac{a}{b} \right)^{-\frac{n+1}{2}}
\right]\cos(n\theta)+
\sum_{n=1}^{\infty}\frac{nB_n^+}{b}\left[\left(\frac{a}{b}
\right)^\frac{n-1}{2}+\left(\frac{a}{b} \right)^{-\frac{n+1}{2}}
\right]\sin(n\theta),
\]
for $-\pi\leq\theta\leq \pi$. If $V^\pm$ satisfy (\ref{eqkfermi})
for some $k\in \{0,1\}$ (with $\gamma=\mathcal{C}$), it follows
that $A_0^-=A_0^+$, and $A_n^-=-A_n^+$, $B_n^-=-B_n^+$,
$n=1,\cdots$ (we remark that $\partial_t=-\partial_r$). It is then
straightforward to verify that this implies that (\ref{eq22+})
holds.

Let us assume now that $\Omega\subset \mathbb{R}^2$ is a general
 bounded, doubly connected domain with smooth boundary and that $f$ is a
one-to-one conformal map that maps $\Omega$ onto the annulus
$\mathcal{A}$, as described above. Set
\[
\gamma=f^{-1}\left( \mathcal{C}\right),
\]
and define $\Omega^\pm\subset \Omega$ accordingly. Assume that
$u^\pm$ satisfy (\ref{eqVpm}) and let
\begin{equation}\label{eqcon2}
V^\pm(w)=u^\pm \left(f^{-1}(w) \right),\ \ w\in \mathcal{A},
\end{equation}
denote their corresponding conformal transplants under the mapping
$f$. As is well known, the functions $V^\pm$ are still harmonic
and thus satisfy (\ref{eqVpm}) with $\Omega=\mathcal{A}$ and
$\gamma=\mathcal{C}$.
 If
\[
u^+=u^-\ \ \textrm{on}\ \ \gamma,
\]
then clearly
\[
V^+=V^-\ \ \textrm{on}\ \ \mathcal{C}.
\]
As we showed previously, the above relation implies that
\begin{equation}\label{eqcon1}
\left(\nabla V^-(w),\nu_w \right)+\left(\nabla V^+(w),\nu_w
\right)=0\ \ \textrm{on}\ \mathcal{C},
\end{equation}
where $\nu_w$ stands for the inner unit normal to $\mathcal{C}$ at
$w\in \mathcal{C}$, while $(\cdot,\cdot)$ stands for the Euclidean
inner product in $\mathbb{R}^2$. If
\[
z=f^{-1}(w)\in \gamma,
\]
and $n_z$ denotes the inner unit normal to $\gamma$ at $z$, we
have that
\begin{equation}\label{eqlambdanormal}
\nu_w=\kappa Df(z)n_z\ \ \textrm{for some}\ \kappa\in \{-1,1 \}.
\end{equation} In order not to interrupt the line of thought, we will show this at
the end of the proof. Therefore, in view of (\ref{eqcon2}) and
(\ref{eqcon1}), we obtain that
\[
\left(Df^{-1}(w)\nabla u^-(z),Df(z)n_z
\right)+\left(Df^{-1}(w)\nabla u^+(z),Df(z)n_z \right)=0\ \
\textrm{on}\ \gamma.
\]
Using that $Df^{-1}(w)Df(z)=I$ and the orthogonality of
$Df^{-1}(w)$ (by the fact that $f^{-1}$ is conformal), we deduce
that
\[
\left(\nabla u^-(z),n_z \right)+\left(\nabla u^+(z),n_z \right)=0\
\ \textrm{on}\ \gamma,
\]
as desired.

If $\partial_t u^+=-\partial_t u^-$ on $\gamma$, we can show that
$u^+=u^-$ on $\gamma$ by reversing the above steps.

 It remains to show (\ref{eqlambdanormal}). To
this end, let $\tau_z$ be a (nontrivial) tangent vector to
$\gamma$ at $z$. Then, since $f(\gamma)=\mathcal{C}$, it follows
that the vector
\[
T_w=Df(z)\tau_z
\]
is tangent to $\mathcal{C}$ at $w$. Moreover, since the matrix
$Df(z)$ is orthogonal (by the conformality of $f$), that is
$Df(z)Df(z)=I$, we see that $|T_w|=|\tau_z|$. The orthogonality of
$Df(z)$ implies in addition that the angle between $\tau_z$ and
$n_z$ is the same as the one between $T_w$ and $Df(z)n_z$. Hence,
the vector $Df(z)n_z$ is normal to $\mathcal{C}$ at $w$. So, since
it has unit norm (again by the orthogonality of $Df(z)$), we infer
that (\ref{eqlambdanormal}) holds.
 \end{proof}
\begin{definition}
\cite[Chpt. 15.5]{henrici3} The unique functions $W_\gamma^+,\
W_\gamma^-$ such that
\begin{equation}\label{eqHar-measure}
\left\{\begin{array}{ll}
  \Delta W_\gamma^\pm=0 &\textrm{in}\ \ \Omega^\pm,  \\
    &   \\
   W_\gamma^\pm =0& \textrm{on}\ \ \partial\Omega^\pm\cap \partial
   \Omega,\\
&\\
W_\gamma^\pm=1&\textrm{on}\ \ \gamma,
\end{array}\right.
\end{equation}
are called the \emph{harmonic measures} of $\gamma$ with respect
to $\partial\Omega^+\cap \partial \Omega$, $\partial\Omega^-\cap
\partial \Omega$ respectively.
\end{definition}
It follows from Proposition \ref{proW} that
\begin{equation}\label{eqHarmonic-measure-derivs}
\partial_nW_\gamma^++\partial_nW_\gamma^-=0\ \ \textrm{on}\ \
\gamma,
\end{equation}
where $n$ denotes the unit normal vector to $\gamma$ pointing in
$\Omega^-$. Furthermore, by the strong maximum principle, we
deduce that
\begin{equation}\label{eqW<1}
0<W_\gamma^\pm<1\ \ \textrm{in}\ \ \Omega^\pm. \end{equation}
Moreover, from Hopf's boundary point lemma, we have that
\begin{equation}\label{eqHopf}
\pm \partial_n W_\gamma^\pm>0\ \ \textrm{on}\ \ \gamma.
\end{equation}
\section{The function $V_0$ and the associated linearized problem}
The function
\begin{equation}\label{eqUU0def}
V_0(x)=\ln \frac{4e^{\sqrt{2}x}}{\left(1+e^{\sqrt{2}x} \right)^2}
\end{equation}
solves
\begin{equation}\label{eqUU}
  v_{xx}+e^{v}=0, \ \ \ x\in \mathbb{R}.
\end{equation}
We see that $V_0(0)=(V_0)_x(0)=0$, $V_0$ is even and
\begin{equation}\label{eqasymptoticU0U0s}
\left\{\begin{array}{lcll}
  V_0(x) & = & \sqrt{2}x+\ln 4+\mathcal{O}\left(e^{-\sqrt{2}|x|}\right), & x\to -\infty, \\
   &  &  &  \\
  (V_0)_x(x) & = & \sqrt{2}+\mathcal{O}\left(e^{-\sqrt{2}|x|}\right), & x\to
  -\infty.
\end{array}\right.
\end{equation}
Note that
\begin{equation}\label{eqUdilation} V(x)=2\ln
\mu+V_0\left(\mu(x-h)\right)
\end{equation}
 also solves (\ref{eqUU}) for every
$\mu>0$ and $h \in \mathbb{R}$. It follows that the linear
equation
\begin{equation}\label{eqlinearhomogenous}
\psi_{xx}+e^{V_0}\psi=0,\ \ x\in \mathbb{R},
\end{equation}
has two linearly independent solutions given by
\[
(V_0)_x\ \ \ \textrm{and}\ \ \ x(V_0)_x+2.
\]
The variation of constants formula yields (see also
\cite{grossi-jfa}):
\begin{lemma}\label{lemlinearvariation}
Suppose that $g\in C(\mathbb{R})$ satisfies $|g(x)|+|g_x(x)|\leq
De^{-d|x|},\ x\in \mathbb{R}$, for some constants $d,D>0$.

If $g$ is even, then the linear equation
\begin{equation}\label{eqlinearinhom}
\psi_{xx}+e^{V_0}\psi=g,\ \ \ x\in \mathbb{R},
\end{equation}
has a one parameter family of even solutions  satisfying
\[\begin{array}{ll}
    \psi(x)=-\sqrt{2}\Delta
x+\frac{1}{\sqrt{2}}\int_{-\infty}^{0}\left(t(V_0)_t(t)+2\right)g(t)dt-2\Delta+\mathcal{O}\left(
e^{-\alpha|x|}\right)+\Delta \mathcal{O}\left(
e^{-\sqrt{2}|x|}\right),& x\to -\infty,
     \\
    \psi_x(x)=-\sqrt{2}\Delta
+\mathcal{O}\left( e^{-\alpha|x|}\right)+\Delta \mathcal{O}\left(
e^{-\sqrt{2}|x|}\right),& x\to -\infty,
  \end{array}
\]
 for every $\Delta \in \mathbb{R}$, where $\alpha=\alpha(d,D)>0$.

If $g$ is odd, then (\ref{eqlinearinhom}) has a one parameter
family of odd solutions satisfying
\[\begin{array}{ll}
  \psi(x)=-\frac{x}{\sqrt{2}}\int_{-\infty}^{0}(V_0)_t(t)g(t)dt-E+\mathcal{O}\left(
e^{-\alpha|x|}\right)+E\mathcal{O}\left( e^{-\sqrt{2}|x|}\right), & x\to -\infty, \\
   &  \\
  \psi_x(x)=-\frac{1}{\sqrt{2}}\int_{-\infty}^{0}(V_0)_t(t)g(t)dt+\mathcal{O}\left(
e^{-\alpha|x|}\right)+E\mathcal{O}\left( e^{-\sqrt{2}|x|}\right),
& x\to -\infty,
\end{array}
\]
for every $E \in \mathbb{R}$, where $\alpha=\alpha(d,D)>0$.
\end{lemma}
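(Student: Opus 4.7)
The plan is to apply variation of constants with the explicit homogeneous basis $\phi_1:=(V_0)_x$ (odd) and $\phi_2:=x(V_0)_x+2$ (even), and then identify the one free parameter by reading off the leading behavior at $-\infty$.

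First I would compute the Wronskian $W=\phi_1\phi_2'-\phi_1'\phi_2$, which is constant because (\ref{eqlinearhomogenous}) carries no first-order term; evaluating at $x=0$, where $(V_0)_x(0)=0$ and $(V_0)_{xx}(0)=-e^{V_0(0)}=-1$, gives $W\equiv 2$. The base-point-zero particular solution is therefore
\[
\psi_p(x)=-\tfrac{1}{2}\phi_1(x)\int_0^x\phi_2\,g\,dt+\tfrac{1}{2}\phi_2(x)\int_0^x\phi_1\,g\,dt,
\]
which inherits the parity of $g$ (the antiderivatives $\int_0^x$ swap parity, so the two summands split as even$\times$even and odd$\times$odd in each case). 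Consequently the general even (resp.\ odd) solution is $\psi_p+c\phi_2$ (resp.\ $\psi_p+c\phi_1$), the promised one-parameter family.

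Next I would extract the asymptotics at $-\infty$. The hypothesis $|g|\le De^{-d|x|}$ makes
\[
I_1:=\int_{-\infty}^{0}\phi_1\,g\,dt,\qquad I_2:=\int_{-\infty}^{0}\phi_2\,g\,dt
\]
absolutely convergent, and $\int_0^x\phi_k g\,dt=-I_k+\mathcal{O}(|x|e^{-d|x|})$ for $k=1,2$. Substituting the expansions (\ref{eqasymptoticU0U0s}) in the forms $\phi_1=\sqrt{2}+\mathcal{O}(e^{-\sqrt{2}|x|})$ and $\phi_2=\sqrt{2}x+2+\mathcal{O}(|x|e^{-\sqrt{2}|x|})$ yields, for any $0<\alpha<\min(d,\sqrt{2})$,
\[
\psi_p(x)=\frac{I_2}{\sqrt{2}}-\frac{\sqrt{2}}{2}xI_1-I_1+\mathcal{O}(e^{-\alpha|x|}).
\]
For the derivative, the $c_i'\phi_i$ cross-terms cancel by the defining constraint of variation of constants, and using $\phi_1'=-e^{V_0}=\mathcal{O}(e^{-\sqrt{2}|x|})$ and $\phi_2'=\sqrt{2}+\mathcal{O}(|x|e^{-\sqrt{2}|x|})$ gives $\psi_p'(x)=-I_1/\sqrt{2}+\mathcal{O}(e^{-\alpha|x|})$.

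The final step is to reparameterize the free constant. In the even case I would set $c=-\Delta+I_1/2$, so that the homogeneous piece $c\phi_2\sim c(\sqrt{2}x+2)$ replaces $-\frac{\sqrt{2}}{2}xI_1$ by $-\sqrt{2}\Delta x$ and, simultaneously, converts the constant $\frac{I_2}{\sqrt{2}}-I_1$ into $\frac{I_2}{\sqrt{2}}-2\Delta$ — the $I_1$ contributions cancel algebraically, which is why the stated formula is so clean. The odd case is handled in parallel, with $\phi_1\to\sqrt{2}$ absorbing the constant shift labelled $-E$. The only real work is the error accounting: showing that the $\mathcal{O}(e^{-\alpha|x|})$ piece is uniform in the free parameter while the parameter-dependent remainder $c(\phi_2-\sqrt{2}x-2)=c\cdot\mathcal{O}(|x|e^{-\sqrt{2}|x|})$ consolidates into $\Delta\,\mathcal{O}(e^{-\sqrt{2}|x|})$ (respectively $E\,\mathcal{O}(e^{-\sqrt{2}|x|})$), via $|x|e^{-\sqrt{2}|x|}\le C_\varepsilon e^{-(\sqrt{2}-\varepsilon)|x|}$ to swallow the polynomial prefactor. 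This bookkeeping is the main, though essentially routine, obstacle.
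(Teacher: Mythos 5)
Your proposal is correct and is exactly the argument the paper has in mind: the paper itself gives no details beyond ``the variation of constants formula yields,'' and you have supplied the standard variation-of-parameters computation with the explicit homogeneous basis $(V_0)_x$, $x(V_0)_x+2$, Wronskian $2$, parity matching, and the reparameterization of the free constant that makes the $I_1$-dependence drop out of the stated formula. The only cosmetic remark is that the parameter-dependent remainder you produce is $\Delta\,\mathcal{O}(|x|e^{-\sqrt{2}|x|})$, marginally weaker than the $\Delta\,\mathcal{O}(e^{-\sqrt{2}|x|})$ written in the lemma; your note that the polynomial prefactor can be absorbed at the cost of an arbitrarily small loss in the exponent is the right reading of the statement and does not affect its subsequent use.
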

%\section{Notation}By $u_i(r)$ we will denote the inner solution
%defined close to $r=r_0$, and by $v_i(r)$ the outer solution defined
%away from $r=r_0$ at the $i$-step, $i=0,1,\cdots$. By $c,C>0$ we
%will denote generic constants independent of $\lambda$ as well as
%$\mu, M, K, h, \Gamma_{i\pm}, \Delta_i, E_i$ introduced in the
%following sections. By $\mathcal{O}$ we will denote the usual Landau
%symbol , i.e., $|\mathcal {O}(t)|\leq C |t|$ for every $t$.

\section{The inner approximation}
\subsection{The set up near the curve $\gamma$} Let $\gamma$ be the closed smooth curve in Proposition
\ref{proW}, and $ \ell= |\Gamma|$ its total length. We consider
the natural parametrization $\gamma=\gamma (s)$ of $\Gamma$ with
positive orientation, where $s$ denotes an arc length parameter
measured from a fixed point of $\Gamma$. Let $n(s)$ denote the
inner unit normal to $\Gamma$. Points $y$ that are
$\delta_0$-close to $\Gamma$, for sufficiently small $\delta_0$,
can be represented in the form
\begin{equation}\label{eqFermi}y=\gamma(s)+tn(s),\ \  \ s\in [0,\ell),\ \ |t | < \delta_0,
\end{equation}
 where the map
$y\mapsto (s, t)$ is a local diffeomorphism. Note that we have
$0<t<\delta_0$ in $\Omega^-$.

For any smooth function $u$ that is defined in this region,
identifying $u(y)$ with $u(s,t)$ (allowing some abuse of
notation), we have that
\begin{equation}\label{eqgradFermi}
\nabla_y u=\left(\frac{u_s}{1-kt},u_t\right),
\end{equation}
(with the obvious interpretation).
Therefore, equation (\ref{eqEqradialmain}) for $u$ expressed in
these coordinates becomes
\begin{equation}\label{eqS(u)}
S(u):=u_{tt}+\frac{1}{a}u_{ss}-\frac{\partial_s
a}{2a^2}u_s+\frac{\partial_t a}{2a}u_t+\lambda^2 e^u=0,
\end{equation}
in the region described in (\ref{eqS(u)}), where
$a=\left(1-tk(s)\right)^2$, and $k$ is the curvature of $\gamma$.

Let $\mu, f\in C_{per}^2([0,\ell])$, the space of $\ell$-periodic,
$C^2$-functions, with $\mu>0$, and
\begin{equation}\label{eqmu,f-norms}
c\ln \frac{1}{\lambda}\leq\lambda\mu\leq C\ln \frac{1}{\lambda},\
\ \lambda\mu f=o\left(\ln(\lambda\mu)\right)\ \  \textrm{as}\
\lambda\to 0\ \ \textrm{on} \ \gamma.
\end{equation}
 Given a smooth function $u$,
defined close to the curve $\gamma$, let
\begin{equation}\label{eqv}
u(s,t)=v(s,x)+2\ln \mu,\ \ \ x=\lambda \mu (t-f),\ s\in [0,\ell],\
|t|<\delta_0.
\end{equation}
We want to express equation (\ref{eqEqradialmain}) in terms of
these new coordinates (in the region described in (\ref{eqv})). We
compute:
\begin{equation}\label{equt-us-uss}
\left\{\begin{array}{lll}
 u_t & = & \lambda\mu v_x, \\
    &   &   \\
  u_{tt} & = & \lambda^2\mu^2v_{xx}, \\
    &   &   \\
  u_s & = & v_s+(\mu'\mu^{-1}x-\lambda \mu f')v_x+2\mu'\mu^{-1}, \\
    &   &   \\
  u_{ss} & =  & v_{ss}+2(\mu'\mu^{-1}x-\lambda\mu f')v_{sx}+(\mu'\mu^{-1}x-\lambda \mu f')^2v_{xx} \\
    &   & +(\mu''\mu^{-1}x-2\lambda \mu'f'-\lambda\mu
    f'')v_x+2\mu''\mu^{-1}-2(\mu')^2\mu^{-2}.
\end{array}\right.
\end{equation}
A short calculation shows that $u$ solves (\ref{eqS(u)}) if and
only if $v$, defined in (\ref{eqv}), solves $S(v+2\ln \mu)=0$,
where
\begin{equation}\label{eqS(v+2ln)}
\begin{array}{lll}
  S(v+2\ln \mu) & = & \lambda^2\mu^2v_{xx}+\frac{1}{[1-(\lambda^{-1}\mu^{-1}x+f)k]^2}\left[
  v_{ss}+2(\mu'\mu^{-1}x-\lambda\mu f')v_{sx}+(\mu'\mu^{-1}x-\lambda \mu f')^2v_{xx} \right. \\
    &   &   \\
    &   &\left.+(\mu''\mu^{-1}x-2\lambda \mu'f'-\lambda\mu
    f'')v_x+2\mu''\mu^{-1}-2(\mu')^2\mu^{-2}\right]-\frac{k}{1-(\lambda^{-1}\mu^{-1}x+f)k}\lambda\mu v_x   \\
    &   &   \\
    &
    &+\frac{(\lambda^{-1}\mu^{-1}x+f)k'}{[1-(\lambda^{-1}\mu^{-1}x+f)k]^3}\left[v_s+(\mu'\mu^{-1}x-\lambda \mu f')v_x+2\mu'\mu^{-1}
    \right]+\lambda^2\mu^2 e^v.
\end{array}
\end{equation}
\subsection{The first order inner approximate solution} As a first
order approximation, valid near the curve $\gamma$, we consider
$u_0$ as described in (\ref{eqv}) with $v=V_0(x)$, defined in
(\ref{eqUU0def}), i.e.,
\begin{equation}\label{equ0}
u_0(s,t)=2\ln \mu+ V_0\left(\lambda\mu(t-f)\right),\ \ s\in
[0,\ell],\ \ |t|\leq\lambda^{-1}\mu^{-1}L.
\end{equation}
Here $L$ satisfies
\begin{equation}\label{eqL}
M\ln\left(\ln\frac{1}{\lambda}\right)\leq L\leq 2M
\ln\left(\ln\frac{1}{\lambda}\right),
\end{equation}
for some large constant $M>0$ to be determined independently of
$\lambda$ (recall (\ref{eqmu,f-norms})).

In view of (\ref{eqS(v+2ln)}), we have
\begin{equation}\label{eqS(u0)}
\begin{array}{lll}
  S(u_0) & = & \frac{1}{[1-(\lambda^{-1}\mu^{-1}x+f)k]^2}\left[
  -(\mu'\mu^{-1}x-\lambda\mu f')^2e^{V_0}+(\mu''\mu^{-1}x-2\lambda \mu'f'-\lambda\mu
    f'')(V_0)_x\right. \\
    &   &   \\
    &   &\left.+2\mu''\mu^{-1}-2(\mu')^2\mu^{-2}\right]-k\lambda\mu (V_0)_x-k\left[
    \frac{1}{1-(\lambda^{-1}\mu^{-1}x+f)k}-1\right]\lambda\mu (V_0)_x   \\
    &   &   \\
    &
    &+\frac{(\lambda^{-1}\mu^{-1}x+f)k'}{[1-(\lambda^{-1}\mu^{-1}x+f)k]^3}\left[(\mu'\mu^{-1}x-\lambda \mu f')(V_0)_x+2\mu'\mu^{-1}
    \right].
\end{array}
\end{equation}
\subsection{The second order inner approximate solution}
We search a refined approximate solution, valid near the curve
$\gamma$, in the form
\begin{equation}\label{equ1}
u_1(s,t)=2\ln \mu+
V_0\left(\lambda\mu(t-f)\right)+\phi\left(s,\lambda\mu(t-f)\right),\
\ s\in [0,\ell],\ \ |t|\leq\lambda^{-1}\mu^{-1}L,
\end{equation}
with $\phi$ to be determined.

In view of (\ref{eqS(v+2ln)}), (\ref{eqS(u0)}), we have
\begin{equation}\label{eqS(u1)}
\begin{array}{lll}
  S(u_1) & = & S(u_0)+\lambda^2\mu^2\left[e^{V_0+\phi}-e^{V_0}-e^{V_0}\phi \right]
+\lambda^2\mu^2\left[\phi_{xx}+e^{V_0}\phi \right]\\
&&\\ & &+
  \frac{1}{[1-(\lambda^{-1}\mu^{-1}x+f)k]^2}\left[\phi_{ss}+2
  (\mu'\mu^{-1}x-\lambda\mu f')\phi_{sx}+(\mu'\mu^{-1}x-\lambda\mu f')^2\phi_{xx}\right. \\
    &   &   \\
    &   &\left.+(\mu''\mu^{-1}x-2\lambda \mu'f'-\lambda\mu
    f'')\phi_x\right]-\frac{k}{1-(\lambda^{-1}\mu^{-1}x+f)k}\lambda\mu \phi_x   \\
    &   &   \\
    &
    &+\frac{(\lambda^{-1}\mu^{-1}x+f)k'}{[1-(\lambda^{-1}\mu^{-1}x+f)k]^3}\left[\phi_s+(\mu'\mu^{-1}x-\lambda \mu f')\phi_x\right].
\end{array}
\end{equation}
Since, at least formally,
\[
S(u_0)=-k\lambda\mu (V_0)_x+\textrm{lower\ order\ terms},
\]
we choose $\phi=\phi_1(s,x)$ to satisfy, for fixed $s\in
[0,\ell]$,
\begin{equation}\label{eqphi(1)}
\phi_{xx}+e^{V_0}\phi=\lambda^{-1}\mu^{-1}k(V_0)_{x},\ \ \ x\in
\mathbb{R}.
\end{equation}

\begin{lemma}\label{lemphi1}
Equation (\ref{eqphi(1)}) has a family of solutions such that
\[\begin{array}{llll}\lambda\mu \phi_1(s,x)& = &
\frac{1}{\sqrt{2}}kx^2+2kx-\sqrt{2}\Delta_1x-2\Delta_1-E_1+(1+|\Delta_1|+|E_1|)\mathcal{O}\left(e^{-c|x|}
\right), & x\to  -\infty,
   \\
   &  &  &  \\
  \lambda \mu (\phi_1)_x(s,x) & = &  \sqrt{2}kx+2k-\sqrt{2}\Delta_1+(1+|\Delta_1|+|E_1|)\mathcal{O}\left(e^{-c|x|} \right), & x\to
  -\infty,
\\
&  &   & \\
 &  &   &   \\
\lambda\mu \phi_1(s,x) &=  & -\frac{1}{\sqrt{2}}
kx^2+2kx+\sqrt{2}\Delta_1x-2\Delta_1+E_1+(1+|\Delta_1|+|E_1|)\mathcal{O}\left(e^{-c|x|} \right), & x\to \infty, \\
   &    &  &\\
\lambda \mu (\phi_1)_x(s,x) &=  &
-\sqrt{2}kx+2k+\sqrt{2}\Delta_1+(1+|\Delta_1|+|E_1|)\mathcal{O}\left(e^{-c|x|}
\right), & x\to \infty,
 \end{array}
\]
for every $\Delta_1,\ E_1 \in C_{per}^2([0,\ell])$.
\end{lemma}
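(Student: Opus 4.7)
Since (\ref{eqphi(1)}) is linear in $\phi_1$ and depends on $s$ only as a parameter through the curvature $k(s)$, my plan is to solve the ODE in $x$ at each fixed $s$ and let the $s$-regularity come from the explicit formula. Setting $\psi=\lambda\mu\phi_1$ rewrites (\ref{eqphi(1)}) as
\[
\psi_{xx}+e^{V_0}\psi=k(s)\,(V_0)_x,
\]
whose forcing is odd in $x$ but does not decay, since $(V_0)_x\to\mp\sqrt{2}$ as $x\to\pm\infty$. Consequently Lemma~\ref{lemlinearvariation} cannot be applied directly, so I would instead carry out an explicit variation of parameters using the two fundamental solutions $u_1=(V_0)_x$ and $u_2=x(V_0)_x+2$ of (\ref{eqlinearhomogenous}).

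The first integral $\tfrac{1}{2}(V_0)_x^2+e^{V_0}=1$ of (\ref{eqUU}), which follows from the normalization $V_0(0)=(V_0)_x(0)=0$, gives Wronskian $W=2$ and simultaneously the identity $(V_0)_x^2=2-2e^{V_0}$. Combined with $e^{V_0}=-(V_0)_{xx}$, it makes the variation-of-parameters integrals completely elementary:
\[
\int_0^x u_1(t)\,(V_0)_t(t)\,dt=2x+2(V_0)_x(x),\qquad \int_0^x u_2(t)\,(V_0)_t(t)\,dt=x^2+2x(V_0)_x(x).
\]
Substituting and simplifying produces the explicit, $x$-odd particular solution
\[
\psi_p(s,x)=\tfrac{k}{2}\,x^2(V_0)_x+2kx+2k(V_0)_x,
\]
which I will also verify by direct differentiation to remove any ambiguity coming from the choice of lower integration limit.

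I then take the general solution to be $\psi=\psi_p+a(s)(V_0)_x+b(s)[x(V_0)_x+2]$ with $a,b\in C_{per}^2([0,\ell])$. Substituting (\ref{eqasymptoticU0U0s}) together with its counterpart at $+\infty$ (which follows from the evenness of $V_0$), the quadratic and odd parts of the expansion come entirely from $\psi_p$, and the coefficients of the linear and constant terms as $x\to-\infty$ are $2k+\sqrt{2}b$ and $2\sqrt{2}k+\sqrt{2}a+2b$, respectively. Matching with the statement uniquely forces
\[
b(s)=-\Delta_1(s),\qquad a(s)=-2k(s)-E_1(s)/\sqrt{2},
\]
and the same choice automatically produces the announced coefficients at $+\infty$, because $\psi_p$ and $u_1$ are odd while $u_2$ is even. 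The remainder bound $(1+|\Delta_1|+|E_1|)\mathcal{O}(e^{-c|x|})$ with $c=\sqrt{2}$ then comes from the exponential tails in (\ref{eqasymptoticU0U0s}) multiplied by factors linear in $k,\Delta_1,E_1$, and the $C_{per}^2$-regularity in $s$ is inherited since $\psi$ depends on $s$ only through the scalar coefficients $k,a,b$.

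The only mild obstacle is noticing that Lemma~\ref{lemlinearvariation} is unavailable because the forcing is bounded but not decaying; once the first integral of (\ref{eqUU}) is brought in, every integral collapses into a closed form and the remaining task is just bookkeeping of the asymptotic expansions.
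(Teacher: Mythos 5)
Your proof is correct, but it takes a genuinely different route from the paper's. The paper's proof guesses the explicit growing solution $Z_1=\tfrac{k}{2}x^2(V_0)_x$ of $Z_{xx}+e^{V_0}Z=k(V_0)_x-2kxe^{V_0}$, writes $\phi_1=\lambda^{-1}\mu^{-1}Z_1+\psi$ so that $\psi$ solves the residual problem $\psi_{xx}+e^{V_0}\psi=2\lambda^{-1}\mu^{-1}kx\,e^{V_0}$ with an exponentially decaying, odd forcing, and then invokes Lemma \ref{lemlinearvariation} plus the integral $\int_{-\infty}^0 xe^{V_0}(V_0)_x\,dx=-\sqrt{2}$ to read off the asymptotics. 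You instead skip the lemma entirely: you correctly note that the forcing $k(V_0)_x$ is bounded but nondecaying (so the lemma does not apply directly), and you perform the variation-of-parameters computation from scratch. The first integral $\tfrac12(V_0)_x^2+e^{V_0}=1$ (with Wronskian $W=2$) lets you close both integrals in elementary form, yielding the explicit particular solution $\psi_p=\tfrac{k}{2}x^2(V_0)_x+2kx+2k(V_0)_x$, which indeed solves the right equation and whose leading quadratic term agrees with the paper's $Z_1$. Your identification $b=-\Delta_1$, $a=-2k-E_1/\sqrt{2}$ from matching the $x\to-\infty$ expansion, and the verification that the parity of $\psi_p$, $u_1=(V_0)_x$ (odd) and $u_2=x(V_0)_x+2$ (even) then automatically reproduces the $x\to+\infty$ sign pattern, is the cleanest part and is correct. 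The only cosmetic imprecision is the claim $c=\sqrt{2}$ in the error term: since terms like $x\,\mathcal{O}(e^{-\sqrt{2}|x|})$ and $x^2\,\mathcal{O}(e^{-\sqrt{2}|x|})$ appear, one should only assert $\mathcal{O}(e^{-c|x|})$ for some $0<c<\sqrt{2}$, which is all the statement requires. Your argument is more explicit and self-contained, avoiding the lemma altogether; the paper's version buys a shorter write-up but relies on first identifying the correct $Z_1$ by inspection. Both end in the same family of solutions.
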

\begin{proof}
Let
\begin{equation}\label{eqZ1def}
Z_1(s,x)=k\frac{x^2}{2}(V_0)_x,\ \ \ x\in \mathbb{R}.
\end{equation}
We see that
\[
\begin{array}{rcl}
  (Z_1)_{xx}+e^{V_0}Z_1
 & = & k(V_0)_x+k\frac{x^2}{2}(V_0)_{xxx}+2kx(V_0)_{xx}+k\frac{x^2}{2}e^{V_0}(V_0)_x \\
   &  &  \\
   &= & k(V_0)_x-2kxe^{V_0}.
\end{array}
\]
We write
\begin{equation}\label{eqWZ+fi}
\phi=\lambda^{-1}\mu^{-1}Z_1+\psi.
\end{equation}
In terms of $\psi$, equation (\ref{eqphi(1)}) becomes
\begin{equation}\label{eqpsi}
\psi_{xx}+e^{V_0}\psi=2\lambda^{-1}\mu^{-1}kxe^{V_0}.
\end{equation}
We can now apply Lemma \ref{lemlinearvariation} to the above
equation, since its righthand side decays exponentially to zero as
$x\to \pm \infty$ and is odd (plus the trivial even function).
Note that
\[
\int_{-\infty}^{0}xe^{V_0}(V_0)_xdx=-\int_{-\infty}^{0}e^{V_0}dx=\int_{-\infty}^{0}(V_0)_{xx}dx=-\sqrt{2}.
\]
Hence, for every $\Delta_1,\ E_1\in C_{per}^2([0,\ell])$, there
exists a solution $\psi_1$ of (\ref{eqpsi}) such that
\[
\begin{array}{ll}
  \lambda\mu\psi_1(s,x)=2kx+\sqrt{2}\Delta_1x-2\Delta_1+E_1+(1+|\Delta_1|+|E_1|)\mathcal{O}\left(e^{-c|x|}\right), & x\to \infty, \\
   &  \\
  \lambda\mu\psi_1(s,x)=2kx-\sqrt{2}\Delta_1x-2\Delta_1-E_1+(1+|\Delta_1|+|E_1|)\mathcal{O}\left(e^{-c|x|}\right), & x\to
  -\infty,
\end{array}
\]
and the corresponding estimates for $(\psi_1)_x$.

Let $\phi_1$ be defined by relation (\ref{eqWZ+fi}) with
$\psi=\psi_1$. Then $\phi_1$ solves equation (\ref{eqphi(1)}) and
has the desired asymptotic behavior as in the assertion of the
lemma.

The proof of the lemma is complete.
\end{proof}

In view of the above lemma, and (\ref{equ1}), we have
\begin{equation}\label{equ1-}
\begin{array}{lll}
  u_1(s,-\lambda^{-1}\mu^{-1}L) & = & 2\ln(2\mu)-\sqrt{2}(L+\lambda\mu f)+\lambda^{-1}\mu^{-1}\frac{k}{\sqrt{2}}(L+\lambda\mu
  f)^2-
  2\lambda^{-1}\mu^{-1}k(L+\lambda\mu f) \\
    &   &   \\
    &   & +\sqrt{2}\Delta_1 \lambda^{-1}\mu^{-1}(L+\lambda\mu f)-2\lambda^{-1}\mu^{-1}\Delta_1-\lambda^{-1}\mu^{-1}E_1 \\
    && \\
    &&+\lambda^{-1}\mu^{-1}(1+|\Delta_1|+|E_1|)e^{-c(L+\lambda\mu
    f)},
\end{array}
\end{equation}
and
\begin{equation}\label{equ1+}
\begin{array}{lll}
  u_1(s,\lambda^{-1}\mu^{-1}L) & = & 2\ln(2\mu)-\sqrt{2}(L-\lambda\mu f)-\lambda^{-1}\mu^{-1}\frac{k}{\sqrt{2}}(L-\lambda\mu
  f)^2+
  2\lambda^{-1}\mu^{-1}k(L-\lambda\mu f) \\
    &   &   \\
    &   & +\sqrt{2}\Delta_1 \lambda^{-1}\mu^{-1}(L-\lambda\mu f)-2\lambda^{-1}\mu^{-1}\Delta_1+\lambda^{-1}\mu^{-1}E_1 \\
    && \\
    &&+\lambda^{-1}\mu^{-1}(1+|\Delta_1|+|E_1|)e^{-c(L-\lambda\mu
    f)}.
\end{array}
\end{equation}
\subsection{The third order inner approximate solution}
We search a refined approximate solution, valid near the curve
$\gamma$, in the form
\begin{equation}\label{equ2}
\begin{array}{lll}
  u_2(s,t) & = & u_1(s,t)+\phi\left(s,\lambda\mu(t-f)\right) \\
    &  &   \\
    &  = & 2\ln \mu+
V_0\left(\lambda\mu(t-f)\right)+\phi_1\left(s,\lambda\mu(t-f)\right)+\phi\left(s,\lambda\mu(t-f)\right),
\end{array}
\end{equation}
$s\in [0,\ell],\ \ |t|\leq\lambda^{-1}\mu^{-1}L$, with $\phi$ to
be determined.

In view of (\ref{eqS(v+2ln)}), by a careful calculation and
rearrangement of terms, we can write
\begin{equation}\label{eqS(u2)}
\begin{array}{lll}
  S(u_2) & = & \lambda^2\mu^2(\phi_{xx}+e^{V_0}\phi)+\frac{1}{\left[
1-(\lambda^{-1}\mu^{-1}x+f)k\right]^2}\left[(\phi_1+\phi)_{ss}+2(\mu'\mu^{-1}x-\lambda\mu
f')(\phi_1+\phi)_{sx} \right.\\
&&\\
& &+(\mu'\mu^{-1}x-\lambda\mu
f')^2(\underbrace{V_0}+\phi_1+\phi)_{xx} +(\mu''\mu^{-1}x-2\lambda
\mu'f'-\lambda\mu f'')(\underbrace{V_0}+\phi_1+\phi)_x
 \\
    &   &   \\
    &   &\left.\underbrace{+2\mu''\mu^{-1}-2(\mu')^2\mu^{-2}}\right]-k\lambda\mu(\underbrace{\phi_1}+\phi)_x-k^2(x+\lambda\mu
    f)(\underbrace{V_0}+\phi_1+\phi)_x\\
    &&\\
    &&
-k\lambda\mu\left[\frac{1}{1-(\lambda^{-1}\mu^{-1}x+f)k}-1-(\lambda^{-1}\mu^{-1}x+f)k\right]({V_0}+\phi_1+\phi)_x
  \\
    &   &   \\
    &
    &+\frac{(\lambda^{-1}\mu^{-1}x+f)k'}{[1-(\lambda^{-1}\mu^{-1}x+f)k]^3}\left[(\phi_1+\phi)_s+(\mu'\mu^{-1}x-\lambda
\mu
f')(V_0+\phi_1+\phi)_x+2\mu'\mu^{-1}\right]+\underbrace{\lambda^2\mu^2e^{V_0}\frac{\phi_1^2}{2}}\\
& & \\
& &  +\lambda^2\mu^2\left(e^{V_0+\phi_1}-e^{V_0}-e^{V_0}\phi_1
-e^{V_0}\frac{\phi_1^2}{2}\right)+\lambda^2\mu^2(e^{V_0+\phi_1+\phi}-e^{V_0+\phi_1}-e^{V_0+\phi_1}\phi)\\
& &\\
& & +\lambda^2\mu^2(e^{V_0+\phi_1}-e^{V_0})\phi,
\end{array}
\end{equation}
where we have underbraced higher order terms.

Motivated from the above relation, and recalling the equation
satisfied by $V_0$, we choose $\phi=\phi_2(s,x)$ to satisfy, for
fixed $s\in [0,\ell]$,
\begin{equation}\label{eqphi(2)}
\begin{array}{lll}
  \phi_{xx}+e^{V_0}\phi & = & \lambda^{-2}\mu^{-2}(\mu'\mu^{-1}x-\lambda\mu
f')^2e^{V_0}-\lambda^{-2}\mu^{-2}(\mu''\mu^{-1}x-2\lambda\mu'f'-\lambda\mu
f'')(V_0)_x \\
    &   &   \\
    &   & -2\lambda^{-2}\mu''\mu^{-3}+2\lambda^{-2}(\mu')^2\mu^{-4}+\lambda^{-1}\mu^{-1}k(\phi_1)_x\\
    &   &   \\
    &   &+\lambda^{-2}\mu^{-2}k^2(x+\lambda\mu
f)(V_0)_x-\frac{1}{2}e^{V_0}\phi_1^2,
\end{array}
\end{equation}
$x\in \mathbb{R}$.

Arguing as in the proof of Lemma \ref{lemphi1} we have:
\begin{lemma}\label{lemphi2}
Equation (\ref{eqphi(2)}) has a family of solutions such that
\[\begin{array}{llll}
\phi_2(s,x) &=  &
\left(-\frac{\sqrt{2}}{6}\lambda^{-2}\mu''\mu^{-3}+\frac{\sqrt{2}}{3}\lambda^{-2}\mu^{-2}k^2\right)x^3+\left(-\lambda^{-2}\mu''
\mu^{-3}+\lambda^{-2}(\mu')^2\mu^{-4}+k^2\lambda^{-2}\mu^{-2}\right.\\
& & \\
&
&\left.-\frac{1}{\sqrt{2}}\lambda^{-2}\mu^{-2}\Delta_1k+\sqrt{2}\lambda^{-1}\mu^{-2}\mu'f'+\frac{1}{\sqrt{2}}\lambda^{-1}\mu^{-1}f''+\frac{1}{\sqrt{2}}\lambda^{-1}\mu^{-1}k^2f\right)x^2\\
& & \\
& & +\left(-\sqrt{2}\Delta_2+B_2\right)
x+A_2-2\Delta_2-E_2+\sum_{i=1}^{2}(|\Delta_i|+|E_i|+1)^2\mathcal{O}(e^{-c|x|}),
 \end{array}
\]
as $x\to -\infty$, and
\[\begin{array}{llll}
\phi_2(s,x) &=  &
\left(\frac{\sqrt{2}}{6}\lambda^{-2}\mu''\mu^{-3}-\frac{\sqrt{2}}{3}\lambda^{-2}\mu^{-2}k^2\right)x^3+\left(-\lambda^{-2}\mu''
\mu^{-3}+\lambda^{-2}(\mu')^2\mu^{-4}+k^2\lambda^{-2}\mu^{-2}\right.\\
& & \\
&
&\left.+\frac{1}{\sqrt{2}}\lambda^{-2}\mu^{-2}\Delta_1k-\sqrt{2}\lambda^{-1}\mu^{-2}\mu'f'-\frac{1}{\sqrt{2}}\lambda^{-1}\mu^{-1}f''-\frac{1}{\sqrt{2}}\lambda^{-1}\mu^{-1}k^2f\right)x^2\\
& & \\
& & +\left(\sqrt{2}\Delta_2
+B_2\right)x+A_2-2\Delta_2+E_2+\sum_{i=1}^{2}(|\Delta_i|+|E_i|+1)^2\mathcal{O}(e^{-c|x|}),
 \end{array}
\]
as $x\to \infty$, where
\[\begin{array}{l}
A_2=a_1(f')^2+a_2\lambda^{-2}\mu''\mu^{-3}+a_3\lambda^{-2}\mu^{-2}k^2+a_4\lambda^{-2}\mu^{-2}\Delta_1^2
+a_5\lambda^{-2}\mu^{-2}E_1^2+a_6\lambda^{-2}\mu^{-2}kE_1,
 \\
      \\
    B_2=b_1\lambda^{-1}\mu^{-1}f''+b_2\lambda^{-1}\mu^{-2}\mu'f'+b_3\lambda^{-2}\mu^{-2}k\Delta_1+b_4\lambda^{-1}\mu^{-1}k^2f+b_5
\lambda^{-2}\mu^{-2}\Delta_1E_1,
  \end{array}
\]
for some known constants $a_i,b_i$ (independent of $\lambda$),
determined by integrals of known functions,
 for every $\Delta_i,\ E_i \in
C_{per}^2([0,\ell]),\ i=1,2$ ($\Delta_1,E_1$ determine $\phi_1$ by
Lemma \ref{lemphi2}, note that here we opted not to place the
coefficient  $\lambda^{-2}\mu^{-2}$ in front of $\Delta_2, E_2$).
Moreover, the above estimates can be differentiated.\end{lemma}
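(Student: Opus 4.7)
The plan is to follow exactly the template of the proof of Lemma \ref{lemphi1}, writing
\[
\phi_2(s,x) = Z_2(s,x) + \psi_2(s,x),
\]
where $Z_2$ is an explicit polynomial ansatz in $x$ with $s$-dependent coefficients, chosen to cancel \emph{every} piece of the right-hand side of (\ref{eqphi(2)}) that does not already decay exponentially as $x\to \pm\infty$, and where $\psi_2$ then solves an equation of the form $L[\psi_2]=g(s,x)$ with $L=\partial_x^2+e^{V_0}$ and $g$ exponentially decaying in $x$. Once such a decomposition is achieved, Lemma \ref{lemlinearvariation} applied separately to the even and odd parts of $g$ in $x$ produces a two-parameter family of solutions, the parameters being $\Delta_2, E_2\in C_{per}^{2}([0,\ell])$, with the required asymptotic expansion at $\pm\infty$.

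The construction of $Z_2$ rests on a small catalogue of explicit particular solutions of $L$ modulo exponentially decaying remainders. Differentiating $V_0''+e^{V_0}=0$ one obtains
\begin{align*}
L[(V_0)_x]&=0, \qquad L[\,x(V_0)_x+2\,]=0,\\
L\left[\tfrac{x^{2}}{2}(V_0)_x\right]&=(V_0)_x-2x\,e^{V_0},\\
L\left[\tfrac{x^{3}}{6}(V_0)_x\right]&=x(V_0)_x-x^{2}e^{V_0},\\
L\left[\tfrac{x^{2}}{2}\right]&=1+\tfrac{x^{2}}{2}e^{V_0}.
\end{align*}
Scanning the right-hand side of (\ref{eqphi(2)}), the non-decaying forcing splits into three types: (i) $s$-dependent multiples of $x(V_0)_x$ (from the $-\lambda^{-2}\mu''\mu^{-3}x(V_0)_x$ and $\lambda^{-2}\mu^{-2}k^{2}x(V_0)_x$ pieces), killed by a multiple of $\tfrac{x^{3}}{6}(V_0)_x$; (ii) $s$-dependent multiples of $(V_0)_x$ (from $2\lambda^{-1}\mu'f'(V_0)_x+\lambda^{-1}\mu f''(V_0)_x+\lambda^{-1}\mu^{-1}k^{2}f(V_0)_x$ and, via Lemma \ref{lemphi1}, from the linearly-growing part $\sqrt{2}kx+2k-\sqrt{2}\Delta_1$ of $\lambda^{-1}\mu^{-1}k(\phi_1)_x$), absorbed by a multiple of $\tfrac{x^{2}}{2}(V_0)_x$; (iii) the $s$-dependent constant $-2\lambda^{-2}\mu''\mu^{-3}+2\lambda^{-2}(\mu')^{2}\mu^{-4}$, absorbed by a multiple of $\tfrac{x^{2}}{2}$. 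All remaining terms—the $e^{V_0}$-weighted polynomials produced by expanding $(\mu'\mu^{-1}x-\lambda\mu f')^{2}e^{V_0}$, the residual $x^{2}e^{V_0}$ and $xe^{V_0}$ pieces produced by the four identities above, the exponentially small remainder of $(\phi_1)_x$, and $-\tfrac{1}{2}e^{V_0}\phi_1^{2}$ (which decays because each polynomial factor of $\phi_1$ is dominated by the $e^{V_0}$ weight)—decay exponentially and are pushed into $g$.

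The polynomial ansatz $Z_2$ then directly contributes the cubic and quadratic coefficients displayed in the statement, while Lemma \ref{lemlinearvariation}—applied to the even and odd parts of $g$—contributes the linear and constant terms together with the free parameters $\Delta_2,E_2$. The constants $a_i,b_i$ in the expressions for $A_2,B_2$ are given by integrals of the form $\int_{-\infty}^{0}(V_0)_t\,g_{\mathrm{odd}}(t)\,dt$ and $\int_{-\infty}^{0}\bigl(t(V_0)_t+2\bigr)g_{\mathrm{even}}(t)\,dt$ of explicit elementary functions of $V_0$ and $e^{V_0}$, which are finite by the exponential decay recorded in (\ref{eqasymptoticU0U0s}). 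The main obstacle is not any single estimate but the parity and labeling bookkeeping: one must track which quadratic combinations of the data $\mu,\mu',\mu'',f,f',f'',k,\Delta_1,E_1$ are even, respectively odd, in $x$, and hence contribute to $A_2$ (even part) or $B_2$ (odd part). This is precisely why $A_2$ collects the even combinations $(f')^{2},\ \mu''\mu^{-3},\ \mu^{-2}k^{2},\ \Delta_1^{2},\ E_1^{2},\ kE_1$, while $B_2$ collects the odd combinations $f'',\ \mu'f',\ k\Delta_1,\ k^{2}f,\ \Delta_1 E_1$. Differentiability of the asymptotic estimates is inherited from that of the expansion in Lemma \ref{lemlinearvariation}, since $s$-differentiation commutes with the construction above.
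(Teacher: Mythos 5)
Your proposal follows essentially the same strategy as the paper, which for Lemma \ref{lemphi2} simply says to ``argue as in the proof of Lemma \ref{lemphi1}'': decompose $\phi_2 = Z_2 + \psi_2$, choose an explicit polynomial-times-$(V_0)_x$ (and polynomial) ansatz $Z_2$ to cancel the non-decaying part of the right-hand side of (\ref{eqphi(2)}), and apply Lemma \ref{lemlinearvariation} to the remaining exponentially decaying forcing split into even and odd parts. Your catalogue of identities $L[(V_0)_x]=0$, $L[x(V_0)_x+2]=0$, $L[\tfrac{x^2}{2}(V_0)_x]=(V_0)_x-2xe^{V_0}$, $L[\tfrac{x^3}{6}(V_0)_x]=x(V_0)_x-x^2e^{V_0}$, $L[\tfrac{x^2}{2}]=1+\tfrac{x^2}{2}e^{V_0}$ is exactly the right toolkit, and your identification of which quadratic combinations of the data feed into $A_2$ (even) versus $B_2$ (odd) via the integrals in Lemma \ref{lemlinearvariation} is correct.

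One piece of the bookkeeping is off. In your item (ii) you treat the linearly growing contribution $\sqrt{2}kx$ coming from $\lambda^{-1}\mu^{-1}k(\phi_1)_x$ as something to absorb with $\tfrac{x^2}{2}(V_0)_x$, but that piece is actually the asymptotic of $k\,x(V_0)_x$ (it comes from $\lambda^{-2}\mu^{-2}k(Z_1)_x = \lambda^{-2}\mu^{-2}k^2x(V_0)_x + \text{decay}$), so it belongs in item (i) and must be absorbed by $\tfrac{x^3}{6}(V_0)_x$; only the constant-at-infinity parts $2k$ (even) and $-\sqrt{2}\Delta_1$-type (odd, $(V_0)_x$-proportional) of $\lambda^{-1}\mu^{-1}k(\phi_1)_x$ go to the $\tfrac{x^2}{2}$ and $\tfrac{x^2}{2}(V_0)_x$ ansatz, respectively. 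If you follow your grouping literally you lose a factor of $2$ in the $k^2$ part of the cubic coefficient; redoing the split as above reproduces the coefficient $-\tfrac{\sqrt{2}}{6}\lambda^{-2}\mu''\mu^{-3}+\tfrac{\sqrt{2}}{3}\lambda^{-2}\mu^{-2}k^2 = \tfrac{\sqrt{2}}{6}\bigl(-\lambda^{-2}\mu''\mu^{-3}+2\lambda^{-2}\mu^{-2}k^2\bigr)$ stated in the lemma. This is a computational slip, not a methodological one; the rest of the argument stands.
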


In view of the above lemma, and (\ref{equ2}), we have
\begin{equation}\label{equ2-}
\begin{array}{lll}
  u_2(s,-\lambda^{-1}\mu^{-1}L) & = & u_1(s,-\lambda^{-1}\mu^{-1}L)-\frac{\sqrt{2}}{6}(2\lambda^{-2}\mu^{-2}k^2-\lambda^{-2}\mu^{-3}\mu'')(L+\lambda\mu
  f)^3 \\
 && \\
    &&+\left(-\lambda^{-2}\mu''
\mu^{-3}+\lambda^{-2}(\mu')^2\mu^{-4}+k^2\lambda^{-2}\mu^{-2}
-\frac{1}{\sqrt{2}}\lambda^{-2}\mu^{-2}\Delta_1k+\sqrt{2}\lambda^{-1}\mu^{-2}\mu'f'\right.\\
&& \\
&&
\left.+\frac{1}{\sqrt{2}}\lambda^{-1}\mu^{-1}f''+\frac{1}{\sqrt{2}}\lambda^{-1}\mu^{-1}k^2f\right)(L+\lambda\mu
f)^2\\
&& \\
&&+(\sqrt{2}\Delta_2-B_2)(L+\lambda\mu f)+A_2-2\Delta_2-E_2\\
&&\\
    &&+\sum_{i=1}^{2}(|\Delta_i|+|E_i|+1)^2e^{-c(L+\lambda\mu
    f)},
\end{array}
\end{equation}
and
\begin{equation}\label{equ2+}
\begin{array}{lll}
   u_2(s,\lambda^{-1}\mu^{-1}L) & = & u_1(s,\lambda^{-1}\mu^{-1}L)
   +\frac{\sqrt{2}}{6}(-2\lambda^{-2}\mu^{-2}k^2+\lambda^{-2}\mu^{-3}\mu'')(L-\lambda\mu
  f)^3 \\
 && \\
    &&+\left(-\lambda^{-2}\mu''
\mu^{-3}+\lambda^{-2}(\mu')^2\mu^{-4}+k^2\lambda^{-2}\mu^{-2}
+\frac{1}{\sqrt{2}}\lambda^{-2}\mu^{-2}\Delta_1k-\sqrt{2}\lambda^{-1}\mu^{-2}\mu'f'\right.\\
&& \\
&&
\left.-\frac{1}{\sqrt{2}}\lambda^{-1}\mu^{-1}f''-\frac{1}{\sqrt{2}}\lambda^{-1}\mu^{-1}k^2f\right)(L-\lambda\mu
f)^2\\
&& \\
&&+(\sqrt{2}\Delta_2+B_2)(L-\lambda\mu f)+A_2-2\Delta_2+E_2\\
&&\\
    &&+\sum_{i=1}^{2}(|\Delta_i|+|E_i|+1)^2e^{-c(L-\lambda\mu
    f)},
\end{array}
\end{equation}
\section{The outer approximations} In this section we will construct
outer approximations $W^\pm$, valid in $\Omega^\pm$ respectively,
with the following properties: $W^\pm$ are harmonic in
$\Omega^\pm$ with zero boundary conditions on $\partial \Omega^\pm
\cap
\partial\Omega$, and they match, in the $C^1$-sense with the  inner
approximation $u_i$ of $i+1$  order,  close to the curve $\gamma$.
\subsection{The second order outer approximation}
By virtue of (\ref{eqmu,f-norms}), (\ref{eqL}), we can expand
\begin{equation}\label{eqW+-}
\begin{array}{lll}
  W^+(s,-\lambda^{-1}\mu^{-1} L) & = & W^+(s,0)-\left(\partial_nW^+(s,0) \right)\lambda^{-1}
\mu^{-1}L+\frac{1}{2}W_{tt}^+(s,0)\lambda^{-2}\mu^{-2}L^2 \\
    &   &   \\
    &   & -\frac{1}{6}W_{ttt}^+(s,0)\lambda^{-3}\mu^{-3}L^3+\mathcal{O}(W_{ttt}^+)\lambda^{-4}\mu^{-4}L^4,
\end{array}
\end{equation}
and
\begin{equation}\label{eqW-+}
\begin{array}{lll}
  W^-(s,+\lambda^{-1}\mu^{-1} L) & = & W^-(s,0)+\left(\partial_nW^-(s,0) \right)\lambda^{-1}
\mu^{-1}L+\frac{1}{2}W_{tt}^-(s,0)\lambda^{-2}\mu^{-2}L^2 \\
    &   &   \\
    &   &+\frac{1}{6}W_{ttt}^-(s,0)\lambda^{-3}\mu^{-3}L^3+\mathcal{O}(W_{ttt}^-)\lambda^{-4}\mu^{-4}L^4.
\end{array}
\end{equation}
From (\ref{equ1-}) and (\ref{eqW+-}), we find that
\begin{equation}\label{equ1-W+-}
\begin{array}{lll}
  (u_1-W^+)(s,-\lambda^{-1}\mu^{-1}L) & = & 2\ln(2\mu)-\sqrt{2}\lambda\mu f-W^+(s,0) \\
    &   &   \\
    &   & +\lambda^{-1}\mu^{-1}\left(\frac{k}{\sqrt{2}}\lambda^2\mu^2f^2-2k\lambda\mu f+\sqrt{2}\Delta_1 \lambda\mu f-2\Delta_1-E_1 \right)  \\
    &   &   \\
    &   & +\lambda^{-1}\mu^{-1}L\left(-\sqrt{2}\lambda\mu+\sqrt{2}k\lambda\mu f-2k+\sqrt{2}\Delta_1+\partial _nW^+(s,0) \right)  \\
    &   &   \\
    &   &  + \lambda^{-1}\mu^{-1}L^2\left(\frac{k}{\sqrt{2}}-\frac{1}{2}W^+_{tt}(s,0)\lambda^{-1}\mu^{-1} \right)\\
   &   &   \\
    &   &
    -\mathcal{O}(W_{ttt}^+)\lambda^{-3}\mu^{-3}L^3+\lambda^{-1}\mu^{-1}\left(1+|\Delta_1|+|E_1|
    \right)e^{-c(L+\lambda\mu f)}.
\end{array}
\end{equation}
Similarly, via (\ref{equ1+}) and (\ref{eqW-+}), we have
\begin{equation}\label{equ1-W-+}
\begin{array}{lll}
  (u_1-W^-)(s,\lambda^{-1}\mu^{-1}L) & = & 2\ln(2\mu)+\sqrt{2}\lambda\mu f-W^-(s,0) \\
    &   &   \\
    &   & +\lambda^{-1}\mu^{-1}\left(-\frac{k}{\sqrt{2}}\lambda^2\mu^2f^2-2k\lambda\mu f-\sqrt{2}\Delta_1 \lambda\mu f-2\Delta_1+E_1 \right)  \\
    &   &   \\
    &   & +\lambda^{-1}\mu^{-1}L\left(-\sqrt{2}\lambda\mu+\sqrt{2}k\lambda\mu f+2k+\sqrt{2}\Delta_1-\partial _nW^-(s,0) \right)  \\
    &   &   \\
    &   &  + \lambda^{-1}\mu^{-1}L^2\left(-\frac{k}{\sqrt{2}}-\frac{1}{2}W^-_{tt}(s,0)\lambda^{-1}\mu^{-1} \right)\\
   &   &   \\
    &   &
    -\mathcal{O}(W_{ttt}^-)\lambda^{-3}\mu^{-3}L^3+\lambda^{-1}\mu^{-1}\left(1+|\Delta_1|+|E_1|
    \right)e^{-c(L-\lambda\mu f)}.
\end{array}
\end{equation}

We claim that there exist harmonic functions $W^\pm$ in
$\Omega^\pm$, satisfying Dirichlet boundary conditions on
$\partial\Omega^\pm\cap\partial\Omega$, and smooth functions
$f,\mu>0,\Delta_1,E_1$ on $\gamma$, satisfying
(\ref{eqmu,f-norms}), such that the following hold: The first two
lines of the righthand side of (\ref{equ1-W+-}), (\ref{equ1-W-+})
respectively vanish, and the third and fourth vanish up to main
order.

%Firstly, we choose $\Delta_1, E_1$ such that
%\begin{equation}\label{eqDelta1--E1}
%\Delta_1=-k\lambda\mu f\ \ \textrm{and}\ \
%E_1=\frac{k}{\sqrt{2}}\lambda^2\mu^2f^2+\sqrt{2}\Delta_1 \lambda\mu
%f=-\frac{1}{\sqrt{2}}k\lambda^2\mu^2f^2.
%\end{equation}
%This makes the second line of (\ref{equ1-W+-}), (\ref{equ1-W-+})
%respectively to be zero. Note that $\mu,\ f,\ W^\pm$ remain to be
%chosen.
%We then choose
%\begin{equation}\label{eqmu1}
%\mu_1=\pm \frac{\sqrt{2}}{\lambda}\left(\ln
%\frac{\sqrt{2}}{\lambda}\right)\partial_nW_\gamma^\pm\ \
%\textrm{on}\ \ \gamma.
%\end{equation}
%The above choice is motivated by formally matching the first order
%inner approximation (\ref{equ0}) with the first order outer ones
%$\approx\Gamma W_\gamma^\pm$, for an appropriate $\Gamma \in
%\mathbb{R}$.
We seek suitable $W^\pm$ in the form
\[
W_1^\pm=\Gamma_1W_\gamma^\pm+{w}_0^\pm+{w}_1^\pm+{w}_2^\pm,
\]
with $\Gamma_1\in \mathbb{R}$, and ${w}_i^\pm$ satisfying
\begin{equation}\label{eqwi}
\left\{\begin{array}{ll}
         \Delta w_i^\pm=0 & \textrm{in}\ \ \Omega^\pm, \\
           &   \\
         w_i^\pm=0  & \textrm{on}\ \ \partial\Omega^\pm \cap
         \partial\Omega,
       \end{array}
 \right.
\end{equation}
$i=0,1,2$. In view of the third lines of (\ref{equ1-W+-}),
(\ref{equ1-W-+}), we choose $w_0^\pm$ by imposing that
\begin{equation}\label{eqw0-boundary}
\partial_n w^\pm_0=2k\ \ \textrm{on}\ \ \gamma.
\end{equation}
By Proposition \ref{proW}, we infer that
\begin{equation}\label{eqw0+=-w0-}
w_0^+=-w_0^-\ \ \textrm{on}\ \ \gamma.
\end{equation}
Therefore we can choose $f$ as
\begin{equation}\label{eqf1}
f_1=\mp \frac{1}{\sqrt{2}}\frac{w_0^\pm}{\lambda\mu}\ \
\textrm{on}\ \ \gamma.
\end{equation}
Writing equations $\Delta W_\gamma^\pm=0$ in coordinates $(s,t)$,
as in (\ref{eqS(u)}), and recalling that $W_\gamma^\pm(s,0)=1,\
s\in [0,\ell]$, we see that
\begin{equation}\label{eqWgamma-on}
(W_\gamma^\pm)_{tt}-k(W_\gamma^\pm)_t=0\ \ \textrm{on}\ \ \gamma.
\end{equation}
So,
\[
\pm\frac{k}{\sqrt{2}}-\frac{1}{2}W^\pm_{tt}\lambda^{-1}\mu^{-1}=
\pm\frac{k}{\sqrt{2}}-\frac{\Gamma_1}{2}k(\partial_nW_\gamma^\pm)\lambda^{-1}\mu^{-1}-\frac{1}{2}\left[(w_0^\pm)_{tt}
+(w_1^\pm)_{tt}+(w_2^\pm)_{tt}\right]\lambda^{-1}\mu^{-1}\ \
\textrm{on}\ \gamma.
\]
In view of the third and fourth line of the righthand side of
(\ref{equ1-W+-}), (\ref{equ1-W-+}) respectively, and the above
relation, given $\Gamma_1$, we can choose $\mu=\mu_1$ such that
\begin{equation}\label{eqmu1}
\lambda\mu_1=\pm
\frac{\Gamma_1}{\sqrt{2}}(\partial_nW_\gamma^\pm)\ \ \textrm{on}\
\ \gamma,
\end{equation}
(keep in mind (\ref{eqHopf})). This is indeed possible by property
(\ref{eqHarmonic-measure-derivs}). In view of the resulting first
line of (\ref{equ1-W+-}), (\ref{equ1-W-+}), we choose $\Gamma_1$
such that
\begin{equation}\label{eqGamma1}
2\ln\frac{\sqrt{2}}{\lambda}+2\ln\Gamma_1=\Gamma_1,
\end{equation}
and $w_1^\pm$ by imposing that
\begin{equation}\label{eqw1}
w_1^\pm=2\ln(\pm \partial_n W_\gamma^\pm)\ \ \textrm{on}\ \
\gamma.
\end{equation}

%\begin{equation}\label{eqmu1,gamma1}
%w_1^\pm
%=2\ln(2\mu_1)=2\ln\frac{\sqrt{2}}{\lambda}+2\ln\left(\ln\frac{\sqrt{2}}{\lambda}
%\right)+2\ln (\pm 2 \partial_nW^\pm_\gamma)\ \ \textrm{on}\ \
%\gamma.
%\end{equation}
%With this choice, the first lines of the righthand sides of
%(\ref{equ1-W+-}) and (\ref{equ1-W-+}) have become zero. For future
%purposes, note that
%\begin{equation}\label{eqw0pm}
%w_0^\pm=\left[2\ln
%\frac{\sqrt{2}}{\lambda}+2\ln\left(\ln\frac{\sqrt{2}}{\lambda}
%\right) \right]W_\gamma^\pm+\tilde{w}_0^\pm\ \ \textrm{in}\ \
%\Omega^\pm,
%\end{equation}
%where
%\begin{equation}\label{eqw0tilda}
%\left\{
%\begin{array}{ll}
%    &   \\
% \tilde{w}_0^\pm=2\ln (\pm 2 \partial_nW^\pm_\gamma)   & \textrm{on}\ \ \gamma,  \\
%    &   \\
%   \tilde{w}_0^\pm=0 & \textrm{on}\ \ \partial \Omega^\pm \cap
%   \partial \Omega.
%\end{array}
%\right.
%\end{equation}
Clearly, if $\lambda>0$ is sufficiently small, equation
(\ref{eqGamma1}) has a unique solution
\begin{equation}\label{eqgamma1+}\Gamma_1=2\ln\frac{\sqrt{2}}{\lambda}+2\ln\left(\ln\frac{\sqrt{2}}{\lambda} \right)+2\ln2 +o(1)\ \ \textrm{as} \ \ \lambda\to
0. \end{equation} Note also that by Proposition \ref{proW}, and
(\ref{eqHarmonic-measure-derivs}), we deduce that
\begin{equation}\label{eqw1tilda-derivatives=0}
\partial_n {w}_1^++\partial_n {w}_1^-=0\ \ \textrm{on}\
\ \gamma.
\end{equation}

It remains to choose $w_2^\pm, \Delta_1$, and $E_1$. In view of
the second and third line of (\ref{equ1-W+-}), (\ref{equ1-W-+})
respectively, and relation (\ref{eqw1tilda-derivatives=0}), we
choose
\begin{equation}\label{eqDelta1--E1}
\begin{array}{l}
  \Delta_1=-k\lambda\mu_1 f_1\mp\frac{1}{\sqrt{2}}\partial_nw_1^\pm, \\
   \\
  E_1=\frac{k}{\sqrt{2}}\lambda^2\mu_1^2f_1^2+\sqrt{2}\Delta_1
  \lambda\mu_1
f_1=-\frac{1}{\sqrt{2}}k\lambda^2\mu_1^2f_1^2 \mp  \lambda\mu_1
f_1\partial_nw_1^\pm.
\end{array}
\end{equation}
With the above choices, relations  (\ref{equ1-W+-}),
(\ref{equ1-W-+}) have simplified to
\begin{equation}\label{equ1-W+-new}
\begin{array}{lll}
  (u_1-W_1^+)(s,-\lambda^{-1}\mu_1^{-1}L) & = & -w_2^+(s,0)+\sqrt{2}\lambda^{-1}\mu_1^{-1}\partial_nw_1^
  +(s,0)+\lambda^{-1}\mu_1^{-1}L\partial_nw_2^+(s,0) \\
    &   &   \\
&   &  - \frac{1}{2}\lambda^{-2}\mu_1^{-2}L^2\left[(w_0^+)_{tt}
+(w_1^+)_{tt}+(w_2^+)_{tt}\right](s,0)\\
   &   &   \\
    &   &
    -\mathcal{O}(W_{1,ttt}^+)\lambda^{-3}\mu_1^{-3}L^3+\lambda^{-1}\mu_1^{-1}\left(1+|\Delta_1|+|E_1|
    \right)e^{-c(L+\lambda\mu_1 f)},
\end{array}
\end{equation}
and
\begin{equation}\label{equ1-W-+new}
\begin{array}{lll}
  (u_1-W_1^-)(s,\lambda^{-1}\mu^{-1}L) & = & -w_2^-(s,0)-\sqrt{2}\lambda^{-1}\mu_1^{-1}\partial_nw_1^-(s,0)
  -\lambda^{-1}\mu_1^{-1}L\partial_nw_2^-(s,0)\\
       &   &   \\
    &   &   -\frac{1}{2}\lambda^{-2}\mu_1^{-2}L^2\left[(w_0^-)_{tt}
+(w_1^-)_{tt}+(w_2^-)_{tt}\right](s,0)\\
   &   &   \\
    &   &
    -\mathcal{O}(W_{1,ttt}^-)\lambda^{-3}\mu_1^{-3}L^3+\lambda^{-1}\mu_1^{-1}\left(1+|\Delta_1|+|E_1|
    \right)e^{-c(L-\lambda\mu_1 f)},
\end{array}
\end{equation}
respectively. Finally, we choose $w_2^\pm$ by imposing that
\begin{equation}\label{eqw2}
w_2^\pm=\pm\sqrt{2}\lambda^{-1}\mu_1^{-1}\partial_nw_1^
  \pm=\frac{2}{\Gamma_1}\frac{\partial_nw_1^\pm}{\partial_nW_\gamma^\pm}\ \ \textrm{on}\ \ \gamma.
\end{equation}
By Theorem 6.6 in \cite{gilbarg} or Theorem 2I in \cite{miranda},
(\ref{eqw1}), (\ref{eqgamma1+}), and the smoothness of the curve
$\gamma$, we infer that
\begin{equation}\label{eqw2holder}
\|w_2^\pm\|_{C^{m}(\bar{\Omega}^\pm)}\leq
C_m\left(\ln\frac{1}{\lambda} \right)^{-1},\ \ m\geq 0.
\end{equation}
For future reference note that, thanks to Proposition \ref{proW},
(\ref{eqw1tilda-derivatives=0}), and  (\ref{eqw2}), we have
\begin{equation}\label{eqw2-derivatives=0}
\partial_n {w}_2^++\partial_n {w}_2^-=0\ \ \textrm{on}\
\ \gamma.
\end{equation}

 By (\ref{eqL}), (\ref{eqf1}), (\ref{eqgamma1+}),
(\ref{equ1-W+-new}), (\ref{equ1-W-+new}), and (\ref{eqw2}),
(keeping in mind (\ref{eqw2holder})), we infer that
\begin{equation}\label{equ1-W1}
\begin{array}{lll}
  \left|(u_1-W^\pm_1)(s,\mp\lambda^{-1}\mu^{-1}L)\right|\leq C

 \left[\ln\left(\ln\frac{1}{\lambda}\right)\right]^3
 \left(\ln\frac{1}{\lambda}\right)^{-2},
\end{array}
\end{equation}
for $\lambda>0$ sufficiently small (having increased the value of
$M$ so that the exponential terms become negligible).
\begin{remark}\label{remNonlinesar-boundary}
In view of the first and third lines of (\ref{equ1-W+-}),
(\ref{equ1-W-+}), one is at first tempted to choose $\mu$,
$w^\pm=W^\pm-w_0^\pm$ such that
\[
2\ln(2\mu)=w^\pm\ \ \textrm{and}\ \
\sqrt{2}\lambda\mu=\pm\partial_nw^\pm\ \ \textrm{on}\ \ \gamma.
\]
This leads to the following semilinear boundary value problems:
\[
\left\{
\begin{array}{ll}
  \Delta w^\pm=0 & \textrm{in}\ \ \Omega^\pm, \\
    &   \\
  w^\pm=0 & \textrm{on}\ \ \partial\Omega^\pm\cap\partial\Omega,\\
& \\
\partial_nw^\pm=\pm\frac{\lambda}{\sqrt{2}}e^\frac{w^\pm}{2}& \textrm{on}\ \
\gamma.
\end{array}
\right.
\]
We are looking for ``large" (maximal) solutions $w^\pm$, so that
(\ref{eqmu,f-norms}) holds. This is a problem which seems to have
its own independent interest, and we expect that it shares common
features with the original problem (\ref{eqEqradialmain}).
\end{remark}
\subsection{The third order outer approximation}
In this subsection, we will choose functions $\mu, f$ satisfying
(\ref{eqmu,f-norms}), functions $\Delta_i, E_i$, $i=1,2$
($\Delta_1, E_1$ possibly new, though they will turn out to be
chosen the same as before), and harmonic functions $W^{\pm}$, so
that the inner approximation $u_2$ matches, in the $C^1$ sense,
with the outer ones $W^{\pm}$ at $\mp \lambda^{-1}\mu^{-1}L$
respectively, with $L$ satisfying (\ref{eqL}), as $\lambda\to 0$.

 From (\ref{equ1-}),
(\ref{equ2-}), and (\ref{eqW+-}), we find that
\begin{equation}\label{equ2-W+-}
\begin{array}{c}
  (u_2-W^+)(s,-\lambda^{-1}\mu^{-1}L)= \\
    \\
  \frac{\sqrt{2}}{6}\left(\lambda^{-2}\mu^{-3}\mu''-2k^2\lambda^{-2}\mu^{-2}
 +\frac{1}{\sqrt{2}}\lambda^{-3}\mu^{-3}W^+_{ttt}(s,0)\right)L^3 \\
    \\
  +\left(\frac{1}{\sqrt{2}}\lambda^{-1}\mu^{-1}k-\frac{1}{\sqrt{2}}\lambda^{-1}\mu^{-1}k^2f+\frac{1}{\sqrt{2}}\lambda^{-1}\mu^{-2}\mu''f
-\mu''\lambda^{-2}\mu^{-3}+(\mu')^2\lambda^{-2}\mu^{-4}\right. \\
    \\
  \left.+\lambda^{-2}\mu^{-2}k^2-\frac{1}{\sqrt{2}}\lambda^{-2}\mu^{-2}k\Delta_1
+\sqrt{2}\lambda^{-1}\mu^{-2}\mu'f'+
\frac{1}{\sqrt{2}}\lambda^{-1}\mu^{-1}f''-\frac{1}{2}\lambda^{-2}\mu^{-2}W^+_{tt}(s,0)\right)L^2 \\
    \\
  +\left(-\sqrt{2}+\sqrt{2}kf-2\lambda^{-1}\mu^{-1}k+\sqrt{2}\lambda^{-1}\mu^{-1}\Delta_1+\frac{\sqrt{2}}{2}\mu^{-1}\mu''f^2
-2\lambda^{-1}\mu^{-2}\mu''f+2(\mu')^2\lambda^{-1}\mu^{-3}f \right. \\
    \\
  +2\lambda^{-1}\mu^{-1}k^2f-\sqrt{2}\lambda^{-1}\mu^{-1}\Delta_1kf+2\sqrt{2}\mu^{-1}\mu'f'f+\sqrt{2}f''f
+\sqrt{2}\Delta_2-b_1\lambda^{-1}\mu^{-1}f'' \\
    \\
  \left.
-b_2\lambda^{-1}\mu^{-2}\mu'f'-b_3\lambda^{-2}\mu^{-2}\Delta_1k-b_4\lambda^{-1}\mu^{-1}k^2f
-b_5\lambda^{-2}\mu^{-2}\Delta_1E_1+\lambda^{-1}\mu^{-1}\partial_nW^+(s,0)\right)L \\
   \\
 +2\ln(2\mu)-\sqrt{2}\lambda\mu f+\frac{1}{\sqrt{2}}\lambda\mu
kf^2-2kf+\sqrt{2}\Delta_1 f- 2\lambda^{-1}\mu^{-1}\Delta_1-
\lambda^{-1}\mu^{-1}E_1-\frac{\sqrt{2}}{3}\lambda \mu k^2
f^3\\
    \\
  +\frac{\sqrt{2}}{6}\lambda\mu'' f^3 -\mu''\mu^{-1}f^2+(\mu')^2\mu^{-2}f^2+k^2f^2-\frac{1}{\sqrt{2}}\Delta_1kf^2+\sqrt{2}\lambda\mu'f'f^2+\frac{1}{\sqrt{2}}
\lambda\mu f''f^2 \\
   \\
+\frac{1}{\sqrt{2}}\lambda\mu
k^2f^3+\sqrt{2}\Delta_2 \lambda\mu f -b_1f''f-b_2\mu^{-1}\mu'f'f-b_3\lambda^{-1}\mu^{-1}\Delta_1kf-b_4k^2f^2-b_5\lambda^{-1}\mu^{-1}\Delta_1E_1f \\
    \\
  +a_1(f')^2+a_2\lambda^{-2}\mu^{-3}\mu''+a_3\lambda^{-2}\mu^{-2}k^2+a_4\lambda^{-2}\mu^{-2}\Delta_1^2+a_5\lambda^{-2}\mu^{-2}E_1^2+a_6\lambda^{-2}\mu^{-2}kE_1-2\Delta_2
 \\
    \\
   -E_2-W^+(s,0) +\mathcal{O}(W^+_{tttt})\lambda^{-4}\mu^{-4}L^4+\sum_{i=1}^{2}(|\Delta_i|+|E_i|+1)^2e^{-c(L+\lambda\mu
    f)}.
\end{array}
\end{equation}
Similarly, via (\ref{equ1+}), (\ref{equ2+}), and (\ref{eqW-+}), we
obtain that
\begin{equation}\label{equ2-W-+}
\begin{array}{c}
  (u_2-W^-)(s,\lambda^{-1}\mu^{-1}L)= \\
    \\
  \frac{\sqrt{2}}{6}\left(\lambda^{-2}\mu^{-3}\mu''-2k^2\lambda^{-2}\mu^{-2}
 -\frac{1}{\sqrt{2}}\lambda^{-3}\mu^{-3}W^-_{ttt}(s,0)\right)L^3 \\
    \\
  +\left(-\frac{1}{\sqrt{2}}\lambda^{-1}\mu^{-1}k+\frac{1}{\sqrt{2}}\lambda^{-1}\mu^{-1}k^2f-\frac{1}{\sqrt{2}}\lambda^{-1}\mu^{-2}\mu''f
-\mu''\lambda^{-2}\mu^{-3}+(\mu')^2\lambda^{-2}\mu^{-4}\right. \\
    \\
  \left.+\lambda^{-2}\mu^{-2}k^2+\frac{1}{\sqrt{2}}\lambda^{-2}\mu^{-2}k\Delta_1
-\sqrt{2}\lambda^{-1}\mu^{-2}\mu'f'-
\frac{1}{\sqrt{2}}\lambda^{-1}\mu^{-1}f''-\frac{1}{2}\lambda^{-2}\mu^{-2}W^-_{tt}(s,0)\right)L^2 \\
    \\
  +\left(-\sqrt{2}+\sqrt{2}kf+2\lambda^{-1}\mu^{-1}k+\sqrt{2}\lambda^{-1}\mu^{-1}\Delta_1+\frac{\sqrt{2}}{2}\mu^{-1}\mu''f^2
+2\lambda^{-1}\mu^{-2}\mu''f-2(\mu')^2\lambda^{-1}\mu^{-3}f \right. \\
    \\
  -2\lambda^{-1}\mu^{-1}k^2f-\sqrt{2}\lambda^{-1}\mu^{-1}\Delta_1kf+2\sqrt{2}\mu^{-1}\mu'f'f+\sqrt{2}f''f
+\sqrt{2}\Delta_2+b_1\lambda^{-1}\mu^{-1}f'' \\
    \\
  \left.
+b_2\lambda^{-1}\mu^{-2}\mu'f'+b_3\lambda^{-2}\mu^{-2}\Delta_1k+b_4\lambda^{-1}\mu^{-1}k^2f
+b_5\lambda^{-2}\mu^{-2}\Delta_1E_1-\lambda^{-1}\mu^{-1}\partial_nW^-(s,0)\right)L \\
   \\
 +2\ln(2\mu)+\sqrt{2}\lambda\mu f-\frac{1}{\sqrt{2}}\lambda\mu
kf^2-2kf-\sqrt{2}\Delta_1 f- 2\lambda^{-1}\mu^{-1}\Delta_1+
\lambda^{-1}\mu^{-1}E_1+\frac{\sqrt{2}}{3}\lambda \mu k^2
f^3\\
    \\
  -\frac{\sqrt{2}}{6}\lambda\mu'' f^3 -\mu''\mu^{-1}f^2+(\mu')^2\mu^{-2}f^2+k^2f^2+\frac{1}{\sqrt{2}}\Delta_1kf^2
  -\sqrt{2}\lambda\mu'f'f^2-\frac{1}{\sqrt{2}}
\lambda\mu f''f^2 \\
   \\
-\frac{1}{\sqrt{2}}\lambda\mu
k^2f^3-\sqrt{2}\Delta_2 \lambda\mu f -b_1f''f-b_2\mu^{-1}\mu'f'f-b_3\lambda^{-1}\mu^{-1}\Delta_1kf-b_4k^2f^2-b_5\lambda^{-1}\mu^{-1}\Delta_1E_1f \\
    \\
  +a_1(f')^2+a_2\lambda^{-2}\mu^{-3}\mu''+a_3\lambda^{-2}\mu^{-2}k^2+a_4\lambda^{-2}\mu^{-2}\Delta_1^2+a_5\lambda^{-2}\mu^{-2}E_1^2+a_6\lambda^{-2}\mu^{-2}kE_1-2\Delta_2
 \\
    \\
  +E_2-W^-(s,0) +\mathcal{O}(W^-_{tttt})\lambda^{-4}\mu^{-4}L^4+\sum_{i=1}^{2}(|\Delta_i|+|E_i|+1)^2e^{-c(L-\lambda\mu
    f)}.
\end{array}
\end{equation}

We seek suitable $W^\pm$ in the form
\begin{equation}\label{eqWpm}
W_2^\pm=\Gamma_2W_\gamma^\pm+{w}_0^\pm+{w}_1^\pm+{w}_2^\pm+w_3^\pm+w_4^\pm,
\end{equation}
with $\Gamma_2\in \mathbb{R}$, and ${w}_i^\pm$, $i=0,\cdots,4$,
satisfying (\ref{eqwi}) but not necessarily equal to those
determined in the previous subsection (as it turns out, $w_0^\pm$,
$w_1^\pm$ and $w_2^\pm$ will be chosen the same as before). We
choose
% We (again) choose
%$w_0^{\pm}$,  $f=f_1$, $\mu=\mu_1$, $w_1^\pm$, $\Gamma=\Gamma_1$,
%$\Delta_1$, and $E_1$ as in (\ref{eqw0-boundary}), (\ref{eqf1}),
%(\ref{eqmu1}), (\ref{eqw1}), (\ref{eqGamma1}), and
%(\ref{eqDelta1--E1}) respectively. For the reader's convenience, we
%write here that
\begin{equation}\label{eqw0third}
\partial_n w_0^\pm=2k,
\end{equation}

\begin{equation}\label{eqGamma1third}
\Gamma_2=2\ln\left(\frac{\sqrt{2}}{\lambda} \right)+2\ln\Gamma_2,\
\ (\textrm{i.e.} \ \Gamma_2=\Gamma_1),
\end{equation}
recall (\ref{eqgamma1+}),
\begin{equation}\label{eqmuthird}
\lambda \mu=\pm \frac{\Gamma_2}{\sqrt{2}}\partial_nW_\gamma^\pm,
\end{equation}

\begin{equation}\label{eqw1third}
w_1^\pm=2\ln(\pm \partial_nW_\gamma^\pm).
\end{equation}
Note that, by Proposition \ref{proW}, we have
\[
w_0^++w_0^-=0\ \ \textrm{and}\ \partial_nw_1^++\partial_nw_1^-=0\
\ \textrm{on}\ \ \gamma.
\]
Given $f$, keeping in mind the above relations, we determine
$\Delta_1$, $E_1$, $E_2$, and $w_2^\pm$ from the following
relations on $\gamma$:
\begin{equation}\label{eqDelta1third}
\sqrt{2}kf+\sqrt{2}\lambda^{-1}\mu^{-1}\Delta_1\pm
\lambda^{-1}\mu^{-1}\partial_n w_1^\pm=0,
\end{equation}

\begin{equation}\label{eqE1third}
E_1=\frac{1}{\sqrt{2}}\lambda^2\mu^2kf^2+\sqrt{2}\Delta_1\lambda
\mu f,
%\mp \lambda\mu w_2^\pm
\end{equation}
\begin{equation}\label{eqE2third}\begin{array}{lll}
    E_2 & = & -\frac{\sqrt{2}}{3}\lambda\mu k^2f^3+\frac{\sqrt{2}}{6}\lambda \mu''
f^3-\frac{1}{\sqrt{2}}\Delta_1kf^2+\sqrt{2}\lambda\mu'f'f^2 \\
      &   &   \\
      &   & +\frac{1}{\sqrt{2}}\lambda\mu f'' f^2+\frac{1}{\sqrt{2}}\lambda\mu
k^2f^3+{\sqrt{2}}\Delta_2\lambda \mu f,
  \end{array}
\end{equation}
and
\begin{equation}\label{eqw2third}
w_2^\pm=\pm\sqrt{2}\lambda^{-1}\mu^{-1}\partial_nw_1^\pm=\frac{2}{\Gamma_1}\frac{\partial_nw_1^\pm}{\partial_nW_\gamma^\pm}.
\end{equation}
We remark that the above choices of $w_0^\pm$, $w_1^\pm$,
$w_2^\pm$, $\Gamma_2$, $\Delta_1$ and $E_1$ are the same as in the
previous subsection. Again, from Proposition \ref{proW}, we find
that
\[
w_2^+=w_2^-\ \ \textrm{on}\ \ \gamma.
\]
Hence, we can choose
\begin{equation}\label{eqDelta2third}
\Delta_2=-\frac{1}{2}\mu^{-1}\mu''f^2+\lambda^{-1}\mu^{-1}\Delta_1kf-2\mu^{-1}\mu'f'f-f''f\mp\frac{1}{\sqrt{2}}\lambda^{-1}\mu^{-1}\partial_n
w_2^\pm.
\end{equation}
Next, given $w_3^\pm$ such that
\begin{equation}\label{eqw3pm=0}
w_3^++w_3^-=0\ \ \textrm{on}\ \ \gamma,
\end{equation}
 we take
\begin{equation}\label{eqfthird}
f=\mp\frac{\lambda^{-1}\mu^{-1}}{\sqrt{2}}w_0^\pm\mp\frac{\lambda^{-1}\mu^{-1}}{\sqrt{2}}w_3^\pm.
\end{equation}
Ideally, we would like $w_3^\pm$ to satisfy
\begin{equation}\label{eqw3approximate}
\begin{array}{lll}
0&= & -2\lambda^{-1}\mu^{-2}\mu''f+2(\mu')^2\lambda^{-1}\mu^{-3}f
+2\lambda^{-1}\mu^{-1}k^2f-b_1\lambda^{-1}\mu^{-1}f''  \\
   && \\
 &&-b_2\lambda^{-1}\mu^{-2}\mu'f'-b_3\lambda^{-2}\mu^{-2}\Delta_1k-b_4\lambda^{-1}\mu^{-1}k^2f
-b_5\lambda^{-2}\mu^{-2}\Delta_1E_1   \\
  && \\
&&+\lambda^{-1}\mu^{-1}\partial_nw_3^\pm.
\end{array}
\end{equation}
Observe that the above problem is actually a \emph{nonlocal}
differential equation on $\gamma$ (keep in mind (\ref{eqfthird})).
Fortunately, the approximate solutions $w_3^\pm$, determined by
setting
$f=f_1=\mp\frac{\lambda^{-1}\mu^{-1}}{\sqrt{2}}w_0^\pm=-\frac{1}{\Gamma_1}\frac{w_0^\pm}{\partial_nW_\gamma^\pm}$
in the above equation, turn out to be satisfactory for our
purposes. Namely, we choose $w^\pm_3$ such that
\begin{equation}\label{eqw3third}\begin{array}{lll}
    \lambda^{-1}\mu^{-1}\partial_nw_3^\pm & = & 2\lambda^{-1}\mu^{-2}\mu''f_1-2(\mu')^2\lambda^{-1}\mu^{-3}f_1
-2\lambda^{-1}\mu^{-1}k^2f_1+b_1\lambda^{-1}\mu^{-1}f''_1
 \\
      &   &   \\
      &   & +b_2\lambda^{-1}\mu^{-2}\mu'f'_1+b_3\lambda^{-2}\mu^{-2}k\left(\mp
\frac{1}{\sqrt{2}}\partial_n  w_1^\pm-k\lambda \mu
f_1\right)+b_4\lambda^{-1}\mu^{-1}k^2f_1
 \\
      &   &   \\
      &   & +b_5\lambda^{-2}\mu^{-2}\left(\frac{1}{\sqrt{2}}\lambda\mu
f_1(\partial_nw_1^\pm)^2\pm\frac{3}{2}k\lambda^2\mu^2f_1^2\partial_nw_1^\pm+\frac{1}{\sqrt{2}}k^2\lambda^3\mu^3f_1^3
\right).
  \end{array}
\end{equation}
Note that (\ref{eqw3pm=0}) holds (recall Proposition \ref{proW}).
Then, denoting the righthand side of (\ref{eqw3approximate}) by
$R(f,w_3^\pm)$, the above choice of $w_3^\pm$ yields that
\begin{equation}\label{eqRw3}
\begin{array}{lll}
    R(f,w_3^\pm) & = &
    -2\lambda^{-1}\mu^{-2}\mu''h+2(\mu')^2\lambda^{-1}\mu^{-3}h
+2\lambda^{-1}\mu^{-1}k^2h-b_1\lambda^{-1}\mu^{-1}h''
 \\
      &   &   \\
      &   &
      -b_2\lambda^{-1}\mu^{-2}\mu'h'+b_3\lambda^{-1}\mu^{-1}k^2h-b_4\lambda^{-1}\mu^{-1}k^2h
 \\
      &   &   \\
      &   & \mp\frac{3}{2}b_5k(2f_1h+h^2)\partial_nw_1^\pm
-\frac{b_5}{\sqrt{2}} k^2\lambda \mu(h^3+3f_1h^2+3f_1^2h),
  \end{array}
\end{equation}
where
\begin{equation}\label{eqhthird}
h\equiv\mp \frac{1}{\sqrt{2}}\lambda^{-1} \mu^{-1} w_3^\pm.
\end{equation}

At this point, let us make a small de-tour. Given $g\in
C^{2,\alpha}(\gamma)$, $0<\alpha<1$, we define the \emph{Dirichlet
to Neumann mappings}
\[
T_{DN}^\pm:C^{2,\alpha}(\gamma)\to C^{1,\alpha}(\gamma),
\]
by $T_{DN}^\pm g=\partial_n w^\pm$ on $\gamma$, where $w^\pm \in
C^{2,\alpha}(\bar{\Omega}^\pm)$ satisfy (\ref{eqwi}) and $w^\pm=g$
on $\gamma$. It is easy to see that $T_{DN}^\pm$ are well defined
linear bounded operators. Furthermore, we have that
$\textrm{Kernel}\left(T_{DN}^\pm \right)=0$ and thus $T_{DN}^\pm$
are invertible with bounded inverses (by the closed graph
theorem). In other words, the \emph{Neumann to Dirichlet mappings}
\[
T_{ND}^\pm\equiv
\left(T_{DN}^\pm\right)^{-1}:C^{1,\alpha}(\gamma)\to
C^{2,\alpha}(\gamma),
\]
are well defined linear bounded operators.

Now, it is easy to see that
\[
\|\partial_n w_3^\pm\|_{C^2(\gamma)}\leq C\left(\ln
\frac{1}{\lambda} \right)^{-1}.
\]
Thus,
\[
\|w_3^\pm\|_{C^{2,\alpha}(\gamma)}=\|T^\pm_{ND}(\partial_n
w_3^\pm)\|_{C^{2,\alpha}(\gamma)}\leq C \|\partial_n
w_3^\pm\|_{C^{1,\alpha}(\gamma)}\leq C\left(\ln \frac{1}{\lambda}
\right)^{-1}.
\]
By the same argument leading to (\ref{eqw2holder}), we infer that
\begin{equation}\label{eqw3glob}
\|w_3^\pm\|_{C^m(\bar{\Omega}^\pm)}\leq   C_m \left( \ln
\frac{1}{\lambda} \right)^{-1},\ \ m\geq 0.
\end{equation}
In turn, via (\ref{eqhthird}), this implies that
\[
\|h\|_{C^m(\gamma)}\leq C_m\left(\ln \frac{1}{\lambda}
\right)^{-2},\ \ m\geq 0.
\]
Hence, from (\ref{eqRw3}), we obtain that
\begin{equation}\label{eqRestim}
\|R(f,w_3^\pm)\|_{C^m(\gamma)}\leq C_m\left(\ln \frac{1}{\lambda}
\right)^{-3},\ \ m\geq 0.
\end{equation}

 Finally, we
choose \begin{equation}\label{eqw4third}\begin{array}{lll}
    w_4^\pm & = & -\mu''\mu^{-1}f^2+(\mu')^2\mu^{-2}f^2+k^2f^2 \\
      &   &   \\
      &   & -b_1f''f-b_2\mu^{-1}\mu'f'f-b_3\lambda^{-1}\mu^{-1}\Delta_1kf-b_4k^2f^2-b_5\lambda^{-1}\mu^{-1}\Delta_1E_1f \\
      &   &   \\
       &   & +a_1(f')^2+a_2\lambda^{-2}\mu^{-3}\mu''
       +a_3\lambda^{-2}\mu^{-2}k^2+a_4\lambda^{-2}\mu^{-2}\Delta_1^2+a_5\lambda^{-2}\mu^{-2}E_1^2+a_6\lambda^{-2}\mu^{-2}kE_1-2\Delta_2.
   \end{array}
\end{equation}

Recalling (\ref{eqWgamma-on}), (\ref{eqmuthird}), we see that
\begin{equation}\label{eqHartt}
-\frac{1}{2}\lambda^{-2}\mu^{-2}\Gamma_2(W_\gamma^\pm)_{tt}=\mp\frac{1}{\sqrt{2}}\lambda^{-1}\mu^{-1}k\
\ \textrm{on}\ \gamma.
\end{equation}
Since $W^\pm_\gamma$ are harmonic in $\Omega^\pm$ respectively, in
view of (\ref{eqS(u)}), near the curve $\gamma$, we have
\[
(W_\gamma^\pm)_{tt}+\frac{1}{\left(1-tk(s)\right)^2}(W_\gamma^\pm)_{ss}+\frac{tk'(s)}{\left(1-tk(s)\right)^3}(W_\gamma^\pm)_{s}
-\frac{k(s)}{1-tk(s)}(W_\gamma^\pm)_{t}=0,
\]
for $\pm t\geq 0$ small, along $\gamma$. Differentiating the above
relation with respect to $t$, we arrive at
\[\begin{array}{c}
    (W_\gamma^\pm)_{ttt}+\frac{2k}{(1-tk)^3}(W_\gamma^\pm)_{ss}+\frac{1}{(1-tk)^2}(W_\gamma^\pm)_{sst}
+\frac{3tkk'}{(1-tk)^4}(W_\gamma^\pm)_{s}+\frac{k'}{(1-tk)^3}(W_\gamma^\pm)_{s}
 \\
      \\
    -\frac{k^2}{(1-tk)^2}(W_\gamma^\pm)_{t}-\frac{k}{1-tk}(W_\gamma^\pm)_{tt}+\frac{tk'}{(1-tk)^3}(W_\gamma^\pm)_{st}=0,
  \end{array}
\]
for $\pm t\geq 0$ small, along $\gamma$. Setting $t=0$, and making
use of (\ref{eqmuthird}), we obtain that
\begin{equation}\label{eqHarttt}
\Gamma_2 (W_\gamma^\pm)_{ttt}=\mp \sqrt{2}\lambda
\mu''\pm2\sqrt{2}k^2\lambda\mu\ \ \textrm{on}\ \ \gamma.
\end{equation}

 From the fact that $w_0^\pm$ are harmonic functions,
(\ref{eqS(u)}), (\ref{eqw0third}), and (\ref{eqfthird}), we obtain
that \begin{equation}\label{eqw0w3tt}\begin{array}{rcl}
    -\frac{1}{2}\lambda^{-2}\mu^{-2}(w_0^\pm+w_3^\pm)_{tt} & = & \mp \frac{1}{\sqrt{2}}\lambda^{-1}\mu^{-2}\mu''f\mp
\frac{1}{\sqrt{2}}\lambda^{-1}\mu^{-1}f''\mp
\sqrt{2}\lambda^{-1}\mu^{-2}\mu'f'-\lambda^{-2}\mu^{-2}k^2
 \\
      &   &   \\
      &   &  +\frac{\lambda^{-2}\mu^{-2}}{2}\left[-2k\mu^{-1}\mu''
f+2k\mu^{-2}(\mu')^2f+2k^3f-b_1kf''-b_2k\mu^{-1}\mu'f'\right. \\
      &   &   \\
      &   & \left.-b_3\lambda^{-1}\mu^{-1}\Delta_1 k^2-b_4k^3f -b_5
\lambda^{-1}\mu^{-1}\Delta_1 E_1 k-kR(f,w_3^{\pm}) \right]
  \end{array}
\end{equation}
on $\gamma$. From the fact that $w_1^\pm$ are harmonic functions,
(\ref{eqS(u)}), (\ref{eqmuthird}), (\ref{eqw1third}), and
(\ref{eqDelta1third})
 we obtain that
\begin{equation}\label{eqw1tt}
-\frac{1}{2}\lambda^{-2}\mu^{-2}(w_1^\pm)_{tt}=\mu''\mu^{-3}\lambda^{-2}-(\mu')^2\mu^{-4}\lambda^{-2}\pm
\frac{1}{\sqrt{2}}k^2\lambda^{-1}\mu^{-1}f\pm
\frac{1}{\sqrt{2}}k\lambda^{-2}\mu^{-2}\Delta_1\ \ \textrm{on}\
\gamma.
\end{equation}

Combining all the above, we find that
\begin{equation}\label{equ2-W2left}
\begin{array}{c}
  (u_2-W_2^+)(s,-\lambda^{-1}\mu^{-1}L)= \\
    \\
  \frac{1}{6}\lambda^{-3}\mu^{-3}\sum_{i=0}^{4}(w_i^+)_{ttt}L^3 \\
    \\
  -\frac{1}{2}\lambda^{-2}\mu^{-2}(w_2^++w_4^+)_{tt}L^2 \\
    \\
+\left[-2k\mu^{-1}\mu''
f+2k\mu^{-2}(\mu')^2f+2k^3f-b_1kf''-b_2k\mu^{-1}\mu'f'\right. \\
       \left.-b_3\lambda^{-1}\mu^{-1}\Delta_1 k^2-b_4k^3f -b_5
\lambda^{-1}\mu^{-1}\Delta_1 E_1 k-kR(f,w_3^{+})
\right]\frac{\lambda^{-2}\mu^{-2}}{2}L^2
    \\
    \\
  +\left(R(f,w_3^+)+\lambda^{-1}\mu^{-1}\partial_nw_4^+\right)L \\
   \\
+\mathcal{O}(W^+_{tttt})\lambda^{-4}\mu^{-4}L^4+\sum_{i=1}^{2}(|\Delta_i|+|E_i|+1)^2e^{-c(L+\lambda\mu
    f)},
\end{array}
\end{equation}
and
 \begin{equation}\label{equ2-W2right}
\begin{array}{c}
  (u_2-W_2^-)(s,\lambda^{-1}\mu^{-1}L)= \\
    \\
  -\frac{1}{6}\lambda^{-3}\mu^{-3}\sum_{i=0}^{4}(w_i^-)_{ttt}L^3 \\
    \\
  -\frac{1}{2}\lambda^{-2}\mu^{-2}(w_2^-+w_4^-)_{tt}L^2 \\
    \\
+\left[-2k\mu^{-1}\mu''
f+2k\mu^{-2}(\mu')^2f+2k^3f-b_1kf''-b_2k\mu^{-1}\mu'f'\right. \\
       \left.-b_3\lambda^{-1}\mu^{-1}\Delta_1 k^2-b_4k^3f -b_5
\lambda^{-1}\mu^{-1}\Delta_1 E_1 k-kR(f,w_3^{-})
\right]\frac{\lambda^{-2}\mu^{-2}}{2}L^2
    \\
    \\
  -\left(R(f,w_3^-)+\lambda^{-1}\mu^{-1}\partial_nw_4^-\right)L \\
   \\
 +\mathcal{O}(W^-_{tttt})\lambda^{-4}\mu^{-4}L^4+\sum_{i=1}^{2}(|\Delta_i|+|E_i|+1)^2e^{-c(L-\lambda\mu
    f)}
\end{array}
\end{equation}
along $\gamma$.
\begin{remark}
In the rest of the paper, the functions $\mu$ and $f$ will be
given by (\ref{eqmuthird}) and (\ref{eqfthird}) respectively.
Furthermore, in order to avoid confusion, we remind to the reader
that the functions $\Delta_1$ and $E_1$ will be as in \emph{this}
subsection. The same remark also applies to the harmonic functions
$w_i^\pm$.
\end{remark}
\section{The remainder of the third order inner approximate solution}
In this subsection, we will estimate the remainder $S(u_2)$ that
is left in the equation by the third order inner approximate
solution $u_2$, defined in (\ref{equ2}) with $\phi=\phi_2$ as in
Lemma \ref{lemphi2}, around the curve $\gamma$. In view of
(\ref{eqS(u2)}) and (\ref{eqphi(2)}), the aforementioned remainder
reduces to
\begin{equation}\label{eqS(u2)estim}
\begin{array}{lll}
  S(u_2) & = & \frac{1}{\left[
1-(\lambda^{-1}\mu^{-1}x+f)k\right]^2}\left[(\phi_1+\phi)_{ss}+2(\mu'\mu^{-1}x-\lambda\mu
f')(\phi_1+\phi)_{sx} \right.\\
&&\\
& &\left. +(\mu'\mu^{-1}x-\lambda\mu f')^2(\phi_1+\phi)_{xx}
+(\mu''\mu^{-1}x-2\lambda \mu'f'-\lambda\mu
f'')(\phi_1+\phi)_x\right]
 \\
    &   &   \\
& &+\frac{1-\left[ 1-(\lambda^{-1}\mu^{-1}x+f)k\right]^2}{\left[
1-(\lambda^{-1}\mu^{-1}x+f)k\right]^2}\left[-(\mu'\mu^{-1}x-\lambda\mu
f')^2e^{V_0}+(\mu''\mu^{-1}x-2\lambda \mu'f'-\lambda\mu
f'')(V_0)_x\right.\\
    & &\\
    &   &\left.+2\mu''\mu^{-1}-2(\mu')^2\mu^{-2} \right]-k\lambda\mu \phi_x-k^2(x+\lambda\mu
    f)(\phi_1+\phi)_x\\
    &&\\
    &&
-k\lambda\mu\left[\frac{1}{1-(\lambda^{-1}\mu^{-1}x+f)k}-1-(\lambda^{-1}\mu^{-1}x+f)k\right]({V_0}+\phi_1+\phi)_x
  \\
    &   &   \\
    &
    &+\frac{(\lambda^{-1}\mu^{-1}x+f)k'}{[1-(\lambda^{-1}\mu^{-1}x+f)k]^3}\left[(\phi_1+\phi)_s+(\mu'\mu^{-1}x-\lambda
\mu
f')(V_0+\phi_1+\phi)_x+2\mu'\mu^{-1}\right]\\
& & \\
& &  +\lambda^2\mu^2\left(e^{V_0+\phi_1}-e^{V_0}-e^{V_0}\phi_1
-e^{V_0}\frac{\phi_1^2}{2}\right)+\lambda^2\mu^2(e^{V_0+\phi_1+\phi}-e^{V_0+\phi_1}-e^{V_0+\phi_1}\phi)\\
& &\\
& & +\lambda^2\mu^2(e^{V_0+\phi_1}-e^{V_0})\phi,
\end{array}
\end{equation}

It follows from (\ref{eqgamma1+}) and (\ref{eqmuthird}), keeping
in mind that $\Gamma_2=\Gamma_1$, that there exist constants
$C_k>0$ such that
\begin{equation}\label{eqmuUpper}
\left|\partial_s^k \mu(s) \right|\leq C_k \frac{1}{\lambda}\ln
\frac{1}{\lambda},\ \ s\in [0,\ell),\ k\geq 0,
\end{equation}
for $\lambda>0$ sufficiently small (recall also the argument
leading to (\ref{eqw2holder})). On the other side, recalling
(\ref{eqHopf}), we find that
\begin{equation}\label{eqmuLower}
\mu(s)\geq c\frac{1}{\lambda}\ln \frac{1}{\lambda},\ \ s\in
[0,\ell),
\end{equation}
for $\lambda>0$ sufficiently small. It then follows from
(\ref{eqfthird}) and (\ref{eqw3glob}) that
\begin{equation}\label{eqfUpper}
\left|\partial_s^k f(s) \right|\leq C_k \left(\ln
\frac{1}{\lambda}\right)^{-1},\ \ s\in [0,\ell),\ k\geq 0,
\end{equation}
for $\lambda>0$ sufficiently small (having increased the values of
the generic constants $C_k$ if needed, something that we will do
in the sequel without explicitly mentioning). Therefore, in view
of (\ref{eqDelta1third})-(\ref{eqDelta2third}), we obtain that
\begin{equation}\label{eqDelta1E1estim}
\left|\partial_s^k \Delta_1(s) \right|+\left|\partial_s^k E_1(s)
\right|\leq C_k,
\end{equation}
and
\begin{equation}\label{eqDelta2E2estim}
\left|\partial_s^k \Delta_2(s) \right|+\left|\partial_s^k E_2(s)
\right|\leq C_k\left(\ln \frac{1}{\lambda}\right)^{-2},
\end{equation}
$s\in [0,\ell)$, $k\geq 0$, for $\lambda>0$ small (keep in mind
(\ref{eqw2holder}), (\ref{eqGamma1third}) and (\ref{eqw1third})).
In turn, the functions $A_2$ and $B_2$, as defined in Lemma
\ref{lemphi2}, satisfy
\[
\left|\partial_s^k A_2(s) \right|+\left|\partial_s^k B_2(s)
\right|\leq C_k\left(\ln \frac{1}{\lambda}\right)^{-2},
\]
$s\in [0,\ell)$, $k\geq 0$, for $\lambda>0$ small. It follows from
the above and Lemma \ref{lemphi2} that
\[
\left|\phi_1(s,x)\right|\leq C\left(\ln
\frac{1}{\lambda}\right)^{-1}(x^2+1),\ \ |x|\leq C\ln\left(\ln
\frac{1}{\lambda}\right),\ s\in [0,\ell).
\]
In fact, this estimate can be differentiated arbitrary many times,
that is \[\left|(\phi_1)_x\right|\leq C\left(\ln
\frac{1}{\lambda}\right)^{-1}\left(|x|+1\right),\ \
\left|(\phi_1)_s\right|\leq C\left(\ln
\frac{1}{\lambda}\right)^{-1}\left(x^2+1\right),
\]
and so on. Similarly, we have the estimate
\[
\left|\phi_2(s,x)\right|\leq C\left(\ln
\frac{1}{\lambda}\right)^{-2}\left(|x|^3+1\right),\ \ |x|\leq
C\ln\left(\ln \frac{1}{\lambda}\right),\ s\in [0,\ell),
\]
which can also be differentiated arbitrary many times.

Armed with the above information, and keeping the asymptotic
behavior of $V_0$ in mind, it is easy to verify the following
proposition which represents the main result of this section.

\begin{proposition}\label{proRemainderInner}
The inner approximation $u_2$, defined in (\ref{equ2}) (with
$\phi=\phi_2$ as in Lemma \ref{lemphi2}), satisfies
\[
\left|\Delta u_2(y)+\lambda^2 e^{u_2(y)} \right|\leq C
\left[\ln\left(\ln \frac{1}{\lambda} \right) \right]^2\left( \ln
\frac{1}{\lambda}\right)^{-1},
\]
provided that there is a (different) constant $C>0$ such that
\[
\textrm{dist}(y,\gamma)\leq C \left[\ln\left(\ln \frac{1}{\lambda}
\right) \right]\left( \ln \frac{1}{\lambda}\right)^{-1},
\]
for $\lambda>0$  sufficiently small.
\end{proposition}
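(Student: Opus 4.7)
The plan is to estimate $S(u_2)=\Delta u_2+\lambda^2 e^{u_2}$ by working in Fermi/rescaled coordinates and going through the already-simplified expression \eqref{eqS(u2)estim} line by line. That expression has the main cancellations built in: equation \eqref{eqphi(2)} was chosen precisely so that the linear operator $\phi_{xx}+e^{V_0}\phi$ absorbs the $O(1)$ leading contributions coming from the metric corrections, from the $k$-curvature terms, and from the quadratic Taylor remainder $\tfrac12 e^{V_0}\phi_1^2$. What remains in \eqref{eqS(u2)estim} are strictly higher-order pieces, and the job is to show each is bounded by $C[\ln\ln(1/\lambda)]^2/\ln(1/\lambda)$.

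Before estimating, I would collect the size table that drives everything. From \eqref{eqmuUpper}--\eqref{eqDelta2E2estim} we have $\lambda\mu\sim\ln(1/\lambda)$, $\partial_s^k\mu/\mu=O(1)$, $\partial_s^k f=O((\ln(1/\lambda))^{-1})$, $\Delta_1,E_1=O(1)$ and $\Delta_2,E_2=O((\ln(1/\lambda))^{-2})$. Combined with $L\sim\ln\ln(1/\lambda)$ and the constraint $|x|\le L$, this gives the crucial geometric smallness $(\lambda^{-1}\mu^{-1}x+f)k=O(\ln\ln(1/\lambda)/\ln(1/\lambda))\to 0$, so each factor $[1-(\lambda^{-1}\mu^{-1}x+f)k]^{-n}$ is bounded and admits a Taylor expansion. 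Lemmas \ref{lemphi1} and \ref{lemphi2} then yield $|\phi_1|\le C(\ln(1/\lambda))^{-1}(x^2+1)$ with $|(\phi_1)_x|\le C(\ln(1/\lambda))^{-1}(|x|+1)$ and $|(\phi_1)_s|\le C(\ln(1/\lambda))^{-1}(x^2+1)$, and the analogous bounds for $\phi_2$ with an extra factor $(\ln(1/\lambda))^{-1}$ and one more power of $x$.

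With these in hand the analysis is mechanical. In the first bracket of \eqref{eqS(u2)estim}, every term has the schematic form (bounded coefficient)$\times$(polynomial in $x$ of bounded degree)$\times$($\phi_1$ or $\phi_2$ derivative), and each piece is bounded by $C[\ln\ln(1/\lambda)]^2/\ln(1/\lambda)$ on $|x|\le L$. The second bracket carries the prefactor $1-[1-(\lambda^{-1}\mu^{-1}x+f)k]^2=O(\ln\ln(1/\lambda)/\ln(1/\lambda))$, while the terms inside it are $O(\ln\ln(1/\lambda))$---the exponential decay $e^{V_0}\le Ce^{-\sqrt{2}|x|}$ absorbs the polynomial factors of $x$. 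The term $-k\lambda\mu(\phi_2)_x$ is $\lambda\mu\cdot O((\ln(1/\lambda))^{-2}[\ln\ln(1/\lambda)]^2)=O([\ln\ln(1/\lambda)]^2/\ln(1/\lambda))$, and $-k^2(x+\lambda\mu f)(\phi_1+\phi_2)_x$ fits the same budget. The cubic geometric correction $\tfrac1{1-y}-1-y=O(y^2)$ with $y=(\lambda^{-1}\mu^{-1}x+f)k$ contributes $O([\ln\ln(1/\lambda)]^2/[\ln(1/\lambda)]^2)$, which after multiplication by $\lambda\mu\sim\ln(1/\lambda)$ (acting on the bounded $(V_0)_x$) still lands at the target; the $k'$-prefactored term is similar.

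For the three nonlinear residuals I would use Taylor's theorem: $|e^{V_0+\phi_1}-e^{V_0}-e^{V_0}\phi_1-\tfrac12 e^{V_0}\phi_1^2|\le Ce^{V_0}|\phi_1|^3\le C(\ln(1/\lambda))^{-3}(x^6+1)e^{V_0}$, and the exponential absorbs the polynomial to leave $O((\ln(1/\lambda))^{-3})$; multiplication by $\lambda^2\mu^2\sim[\ln(1/\lambda)]^2$ yields $O((\ln(1/\lambda))^{-1})$, within budget. The pieces built from $\phi_2^2$ and from $(e^{V_0+\phi_1}-e^{V_0})\phi_2\sim e^{V_0}\phi_1\phi_2$ are even smaller because of the improved decay of $\phi_2$. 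The main obstacle is not any single inequality but the bookkeeping: one must simultaneously track smallness in $1/\ln(1/\lambda)$ and mild polynomial loss in $\ln\ln(1/\lambda)$ for every factor, and one must repeatedly exploit that $e^{V_0}$ and its derivatives swallow polynomial weights on the window $|x|\le L$. Summing all contributions gives $|S(u_2)|\le C[\ln\ln(1/\lambda)]^2/\ln(1/\lambda)$ uniformly for $|t|\le\lambda^{-1}\mu^{-1}L$, which is exactly the distance condition stated in the proposition.
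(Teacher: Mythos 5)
Your proposal is correct and follows exactly the route the paper takes: the paper assembles the size estimates (\ref{eqmuUpper})--(\ref{eqDelta2E2estim}) together with the polynomial-in-$x$ bounds on $\phi_1,\phi_2$ and then simply asserts that it is ``easy to verify'' the bound by inspecting (\ref{eqS(u2)estim}). You supply the term-by-term bookkeeping that the paper leaves implicit; the only nitpick is that, in the second bracket, the $O(\ln\ln(1/\lambda))$ size comes from the $(\mu''\mu^{-1}x)(V_0)_x$ piece (where $(V_0)_x$ is merely bounded, not decaying), while the exponential decay of $e^{V_0}$ only controls the first term of that bracket --- a minor imprecision in the phrasing, not a gap.
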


\section{The remainder of the outer approximations}
Here, we will estimate the remainder that is left in the equation
of (\ref{eqEqradialmain}) by the outer approximations $W^\pm_2$.
\begin{proposition}\label{proRemainderOuter}
The outer approximations $W^\pm_2$, defined in (\ref{eqWpm}),
satisfy
\[
\left|\Delta W^\pm_2(y)+\lambda^2 e^{W^\pm_2(y)} \right|\leq
\left( \ln \frac{1}{\lambda}\right)^{-cM},
\]
if \begin{equation}\label{eqdescrreg}y\in \Omega^\pm \
\textrm{and}\
                  \textrm{dist}(y,\gamma)\geq 2M \left[\ln\left(\ln
\frac{1}{\lambda} \right) \right]\left(\ln
\frac{1}{\lambda}\right)^{-1},\end{equation} where $c>0$ is
independent of \emph{both} $\lambda>0$ and $M>1$, provided that
$\lambda>0$ is sufficiently small.
\end{proposition}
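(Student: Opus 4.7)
The plan rests on two simple observations. First, since each of $W_\gamma^\pm$, $w_0^\pm,\dots,w_4^\pm$ is harmonic in $\Omega^\pm$ by construction, the combination $W_2^\pm$ from (\ref{eqWpm}) is itself harmonic there; the residual therefore collapses to $\lambda^2 e^{W_2^\pm}$, and the whole task becomes a pointwise upper bound on this exponential on the set (\ref{eqdescrreg}). Second, apart from the leading term $\Gamma_2 W_\gamma^\pm$, every other contribution in (\ref{eqWpm}) is $O(1)$ uniformly in $\lambda$: $w_0^\pm$ and $w_1^\pm$ are bounded by standard elliptic regularity applied to (\ref{eqw0third}), (\ref{eqw1third}) together with (\ref{eqHopf}); $w_2^\pm$ and $w_3^\pm$ are handled by (\ref{eqw2holder}) and (\ref{eqw3glob}); and $w_4^\pm$ is controlled through its explicit formula (\ref{eqw4third}) in conjunction with the size estimates (\ref{eqmuUpper})--(\ref{eqDelta2E2estim}). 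The first step is therefore the clean bound $W_2^\pm = \Gamma_2 W_\gamma^\pm + O(1)$ uniformly on $\overline{\Omega^\pm}$.

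The second step is a routine far-field estimate. Split (\ref{eqdescrreg}) into a fixed tubular neighborhood $\{\operatorname{dist}(y,\gamma)\le t_0\}$, with $t_0>0$ independent of $\lambda$, and its complement. On the complement, the strong maximum principle gives $W_\gamma^\pm\le 1-\varepsilon_0$ for some fixed $\varepsilon_0>0$, hence
\[
\lambda^2 e^{W_2^\pm}\le C\lambda^2 e^{(1-\varepsilon_0)\Gamma_2} = O\!\left(\lambda^{2\varepsilon_0}(\ln(1/\lambda))^{2(1-\varepsilon_0)}\right),
\]
which is polynomially small in $\lambda$ and dwarfs the claimed logarithmic bound, uniformly in $M$.

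The real work is the third step, inside the tubular neighborhood. There I would Taylor expand $W_\gamma^\pm$ in Fermi coordinates: since $W_\gamma^\pm(s,0)\equiv 1$, Taylor's theorem combined with Hopf's lemma (\ref{eqHopf}) yields
\[
W_\gamma^\pm(s,t)\le 1-|\partial_n W_\gamma^\pm(s,0)|\,|t| + C|t|^2
\]
uniformly in $s\in[0,\ell]$, with $|\partial_n W_\gamma^\pm|\ge c_1>0$ by compactness of $\gamma$. Multiplying by $\Gamma_2$ and invoking the defining relation (\ref{eqmuthird}), namely $\Gamma_2|\partial_n W_\gamma^\pm|=\sqrt{2}\lambda\mu$, together with the lower bound (\ref{eqmuLower}) and the hypothesis (\ref{eqdescrreg}) on $|t|=\operatorname{dist}(y,\gamma)$, one gets
\[
\Gamma_2 W_\gamma^\pm \le \Gamma_2 - \sqrt{2}\lambda\mu\,|t| + C\Gamma_2|t|^2 \le \Gamma_2 - c_2\,M\,\ln\ln(1/\lambda) + o(1)
\]
for some $c_2>0$ independent of $M$ and $\lambda$, since the quadratic remainder satisfies $\Gamma_2|t|^2 = O((\ln\ln(1/\lambda))^2/\ln(1/\lambda))=o(1)$ by (\ref{eqL}) and (\ref{eqmu,f-norms}). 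Combined with $\lambda^2 e^{\Gamma_2}=2\Gamma_2^2=O((\ln(1/\lambda))^2)$, which follows from (\ref{eqGamma1third}), and the $O(1)$ error for the rest of $W_2^\pm$, this produces
\[
\lambda^2 e^{W_2^\pm}\le C(\ln(1/\lambda))^{2-c_2 M},
\]
which is in turn dominated by $(\ln(1/\lambda))^{-(c_2/2)M}$ once $M$ is taken large enough (independently of $\lambda$) to absorb the constant $2$, giving the conclusion with $c=c_2/2$.

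The only genuine obstacles are organizational: one must verify the uniform-in-$s$ Taylor expansion of $W_\gamma^\pm$ along $\gamma$ (standard, given compactness of $\gamma$ and smoothness of $W_\gamma^\pm$ up to $\gamma$), and the uniform $O(1)$ control of $w_4^\pm$, which requires tracking the many cubic and quadratic combinations in (\ref{eqw4third}) through the bounds (\ref{eqmuUpper})--(\ref{eqDelta2E2estim}).
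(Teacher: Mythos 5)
Your proposal is correct in substance and follows essentially the same route as the paper: both proofs reduce the residual to $\lambda^2 e^{W_2^\pm}$ by harmonicity, write $W_2^\pm=\Gamma_2W_\gamma^\pm+O(1)$ using the uniform bounds on the corrector harmonic functions $w_i^\pm$, and then use (\ref{eqW<1}) (far from $\gamma$) together with (\ref{eqHopf}) (near $\gamma$) to obtain $W_\gamma^\pm\le 1-cM\bigl[\ln(\ln\tfrac{1}{\lambda})\bigr]\bigl(\ln\tfrac{1}{\lambda}\bigr)^{-1}$ in the region (\ref{eqdescrreg}), after which the arithmetic with (\ref{eqGamma1third}) gives the claimed bound.

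One small loose end in your write-up: the inequality $\Gamma_2|t|^2=O\!\left((\ln\ln\tfrac{1}{\lambda})^2/\ln\tfrac{1}{\lambda}\right)=o(1)$ is valid only near the inner edge $|t|\sim M[\ln\ln\tfrac{1}{\lambda}](\ln\tfrac{1}{\lambda})^{-1}$ of the tubular neighbourhood, not throughout $|t|\le t_0$ with $t_0$ fixed, where $\Gamma_2|t|^2\sim\ln\tfrac{1}{\lambda}\to\infty$. The Taylor step should instead be closed by one of the following equivalent fixes: (i) shrink $t_0$ so that the linear term $\sqrt{2}\lambda\mu|t|$ dominates $C\Gamma_2|t|^2$ throughout $|t|\le t_0$ (possible since $\lambda\mu/\Gamma_2=|\partial_n W_\gamma^\pm|/\sqrt{2}$ is bounded below on $\gamma$ by (\ref{eqHopf}) and (\ref{eqmuthird})); or (ii) simply invoke, as the paper does implicitly, that near $\gamma$ the Hopf-type linear decay already gives $W_\gamma^\pm\le 1-c|t|$ and combine this with the strict bound $W_\gamma^\pm\le 1-\varepsilon_0$ off any fixed tube. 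With either fix, the rest of your argument goes through and matches the paper's proof.
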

\begin{proof}
In view of (\ref{eqW<1}) and (\ref{eqHopf}), we have that
\[
W_\gamma^\pm\leq 1-cM \left[\ln \left(\ln \frac{1}{\lambda}
\right) \right] \left(\ln \frac{1}{\lambda} \right)^{-1}
\]
in the corresponding regions that are described by
(\ref{eqdescrreg}), with the constant $c>0$ independent of
\emph{both} $\lambda>0$ and $M>1$, provided that $\lambda>0$ is
sufficiently small. So, in these regions, recalling
(\ref{eqgamma1+}) and that $\Gamma_1=\Gamma_2$, we obtain that
\[
\Gamma_2 W_\gamma^\pm\leq 2 \ln \frac{1}{\lambda}-cM \ln \left(\ln
\frac{1}{\lambda}\right),
\]
for a possibly different constant $c>0$ (still independent of both
$\lambda$ and $M$), provided that $\lambda>0$ is sufficiently
small.

Since $W_2^\pm$ are harmonic and all the $w^\pm_i$'s are uniformly
bounded in $\lambda$ (and independent of $M$), we find that in
each respective region of (\ref{eqdescrreg}) it holds that
\[
\left| \Delta W^\pm_2+\lambda^2 e^{W^\pm_2}\right|=\lambda^2
e^{W^\pm_2}\leq C \lambda^2 e^{\Gamma_2 W^\pm_\gamma}\leq C
\left(\ln \frac{1}{\lambda} \right)^{-cM},
\]
for some constants $c,C>0$ that are independent of both $\lambda$
and $M$, provided that $\lambda>0$ is sufficiently small, as
desired.
\end{proof}

\section{Patching the inner and outer approximations}

In this section, we will construct a global smooth approximate
solution to the problem (\ref{eqEqradialmain}), for $\lambda>0$
small, by interpolating between the inner approximation $u_2$ and
the outer ones $W^\pm_2$ at a distance of order
$\left[\ln\left(\ln \frac{1}{\lambda} \right) \right]\left( \ln
\frac{1}{\lambda}\right)^{-1}$ from the curve $\gamma$. To this
aim, we will need to estimate the differences $u_2-W_2^\pm$,
$\left(u_2-W_2^\pm\right)_t$ and
$\Delta\left(u_2-W_2^\pm\right)=\Delta u_2$ in the respective
interpolation regions on each side of $\gamma$.

\subsection{The estimate for $u_2-W_2^\pm$}
In view of (\ref{eqw0third}), (\ref{eqGamma1third}),
(\ref{eqmuthird}), (\ref{eqw1third}), (\ref{eqw2holder}),
(\ref{eqw3glob}), (\ref{eqRestim}), (\ref{eqmuUpper}),
(\ref{eqmuLower}), (\ref{eqfUpper}) and (\ref{eqDelta1E1estim}),
it follows from (\ref{equ2-W2left}) and (\ref{equ2-W2right}) that
\begin{equation}\label{equ2-W2final}
\left|\left(u_2-W_2^\pm\right) (s,\mp t) \right|\leq  C
\left[\ln\left(\ln \frac{1}{\lambda} \right) \right]^4\left( \ln
\frac{1}{\lambda}\right)^{-3},
\end{equation}
for $s\in [0,\ell)$ and $M  \left[\ln\left(\ln \frac{1}{\lambda}
\right) \right]\left( \ln \frac{1}{\lambda}\right)^{-1}\leq t\leq
2M \left[\ln\left(\ln \frac{1}{\lambda} \right) \right]\left( \ln
\frac{1}{\lambda}\right)^{-1}$, provided that $\lambda>0$ is
sufficiently small (having increased the value of $M$ if
necessary).

\subsection{The estimate for $\left(u_2-W_2^\pm\right)_t$}
It follows from (\ref{eqasymptoticU0U0s}), Lemma \ref{lemphi1},
(\ref{equ2}), and Lemma \ref{lemphi2} that
\[
\begin{array}{rcl}
  (u_2)_x & =  &
(V_0)_x(x)+(\phi_1)_x(s,x)+(\phi_2)_x(s,x)\\
& &\\
  & =&3\left(-\frac{\sqrt{2}}{6}\lambda^{-2}\mu''\mu^{-3}+\frac{\sqrt{2}}{3}\lambda^{-2}\mu^{-2}k^2 \right)x^2
    +2\left(-\lambda^{-2}\mu'' \mu^{-3}+\lambda^{-2}(\mu')^2\mu^{-4}+k^2\lambda^{-2}\mu^{-2}\right. \\
    &   &   \\
    &   &  \left. +\frac{\sqrt{2}}{2}k\lambda^{-1}\mu^{-1}
    -\frac{1}{\sqrt{2}}\lambda^{-2}\mu^{-2}\Delta_1k+\sqrt{2}\lambda^{-1}\mu^{-2}\mu'f'+\frac{1}{\sqrt{2}}\lambda^{-1}\mu^{-1}f''+\frac{1}{\sqrt{2}}
    \lambda^{-1}\mu^{-1}k^2 f \right)x \\
    &   &   \\
    &   &+2k
    \lambda^{-1}\mu^{-1}-\sqrt{2}\lambda^{-1}\mu^{-1}\Delta_1-\sqrt{2}\Delta_2+B_2+\sqrt{2}+\sum_{i=1}^{2}\left(|\Delta_i|+|E_i|+1
    \right)^2 \mathcal{O}(e^{cx}),
\end{array}
\]
as $x\to -\infty$. In turn, it follows from the definition of the
coordinate $x$ in (\ref{eqv}) that
\[
\begin{array}{rcl}
  (u_2)_t & = & \left(-\frac{1}{\sqrt{2}}\lambda\mu''+\sqrt{2}\lambda\mu k^2 \right)t^2+\left(\sqrt{2}\lambda \mu'' f-2\sqrt{2}\lambda\mu f k^2-2\mu''\mu^{-1}+2(\mu')^2\mu^{-2}+2k^2 \right. \\
    &   &   \\
    &   & \left. +\sqrt{2}k\lambda\mu-\sqrt{2}\Delta_1k+2\sqrt{2}\lambda\mu' f'+\sqrt{2}\lambda\mu f''+\sqrt{2}\lambda\mu k^2 f \right)t-\frac{1}{\sqrt{2}}\lambda\mu'' f^2+\sqrt{2}\lambda\mu f^2 k^2 \\
    &   &   \\
    &   & +2f\mu'' \mu^{-1}-2f (\mu')^2\mu^{-2}-2k^2 f-\sqrt{2}k\lambda \mu f+\sqrt{2}f\Delta_1 k-2\sqrt{2}\lambda f\mu' f'-\sqrt{2}\lambda\mu f f''  \\
    &   &   \\
    &   &-\sqrt{2}\lambda\mu k^2 f^2+2k-\sqrt{2}\Delta_1-\sqrt{2}\Delta_2\lambda\mu+B_2\lambda\mu+\sqrt{2}\lambda\mu   \\
    &   &   \\
    &   & +\lambda\mu\sum_{i=1}^{2}\left(|\Delta_i|+|E_i|+1
    \right)^2 \mathcal{O}(e^{c\lambda\mu(t-f)}),
\end{array}
\]
for $s\in [0,\ell)$ and $-C  \left[\ln\left(\ln \frac{1}{\lambda}
\right) \right]\left( \ln \frac{1}{\lambda}\right)^{-1}\leq t\leq
-c \left[\ln\left(\ln \frac{1}{\lambda} \right) \right]\left( \ln
\frac{1}{\lambda}\right)^{-1}$. Analogously, we have that
\[
\begin{array}{rcl}
  (u_2)_t & = & \left(\frac{1}{\sqrt{2}}\lambda\mu''-\sqrt{2}\lambda\mu k^2 \right)t^2+\left(-\sqrt{2}\lambda \mu'' f+2\sqrt{2}\lambda\mu f k^2-2\mu''\mu^{-1}+2(\mu')^2\mu^{-2}+2k^2 \right. \\
    &   &   \\
    &   & \left. -\sqrt{2}k\lambda\mu+\sqrt{2}\Delta_1k-2\sqrt{2}\lambda\mu' f'-\sqrt{2}\lambda\mu f''-\sqrt{2}\lambda\mu k^2 f \right)t+\frac{1}{\sqrt{2}}\lambda\mu'' f^2-\sqrt{2}\lambda\mu f^2 k^2 \\
    &   &   \\
    &   & +2f\mu'' \mu^{-1}-2f (\mu')^2\mu^{-2}-2k^2 f+\sqrt{2}k\lambda \mu f-\sqrt{2}f\Delta_1 k+2\sqrt{2}\lambda f\mu' f'+\sqrt{2}\lambda\mu f f''  \\
    &   &   \\
    &   &+\sqrt{2}\lambda\mu k^2 f^2+2k+\sqrt{2}\Delta_1+\sqrt{2}\Delta_2\lambda\mu+B_2\lambda\mu-\sqrt{2}\lambda\mu   \\
    &   &   \\
    &   & +\lambda\mu\sum_{i=1}^{2}\left(|\Delta_i|+|E_i|+1
    \right)^2 \mathcal{O}(e^{-c\lambda\mu(t-f)}),
\end{array}
\]
for $s\in [0,\ell)$ and $c  \left[\ln\left(\ln \frac{1}{\lambda}
\right) \right]\left( \ln \frac{1}{\lambda}\right)^{-1}\leq t\leq
C \left[\ln\left(\ln \frac{1}{\lambda} \right) \right]\left( \ln
\frac{1}{\lambda}\right)^{-1}$.

%\[
%(w_1^\pm)_s(s,0)=2\mu'\mu^{-1},
%\]
%\[
%(w_1^\pm)_{ss}(s,0)=2\mu''\mu^{-1}-2(\mu')^2\mu^{-2},
%\]
%\[
%(w_1^\pm)_{tt}(s,0)+2\mu''\mu^{-1}-2(\mu')^2\mu^{-2}\pm
%\sqrt{2}k^2\lambda\mu f\pm \sqrt{2}k\Delta_1=0,
%\]
%\[\begin{array}{rl}
%    0= &  (w_0^\pm+w_3^\pm)_{tt}(s,0)\mp \sqrt{2}\lambda\mu''f\mp
%2\sqrt{2}\lambda\mu'f'\mp \sqrt{2}\lambda\mu
%f''-2k^2-2k\mu^{-1}\mu''f+2(\mu')^2k\mu^{-2}f
% \\
%      &       \\
%      & +2k^3f-b_1kf''-b_2k\mu^{-1} \mu'f'-b_3\lambda^{-1}\mu^{-1}\Delta_1 k^2-b_4
%k^3 f-b_5\lambda^{-1}\mu^{-1}\Delta_1 E_1 k-kR(f,w_3^\pm),
%  \end{array}
%\]

On the other side, it follows from (\ref{eqWpm}),
(\ref{eqw0third}), (\ref{eqmuthird}), (\ref{eqDelta1third}),
(\ref{eqDelta2third}), (\ref{eqHartt}), (\ref{eqHarttt}),
(\ref{eqw0w3tt}) and (\ref{eqw1tt}) that
\[
\begin{array}{rcl}
  (W_2^\pm)_t(s,t) & = & \Gamma_2 (W_\gamma^\pm)_t(s,t)+\sum_{i=0}^{4}(w_i^\pm)_t(s,t) \\
    &   &   \\
    &  = & \pm \sqrt{2}\lambda\mu+2k \mp \sqrt{2}k\lambda\mu f\mp\sqrt{2}\Delta_1
\mp \sqrt{2}\lambda\mu \Delta_2\mp
\frac{1}{\sqrt{2}}\lambda\mu''f^2\pm \sqrt{2}\Delta_1 k f
 \\
    &   &   \\
    &   & \mp 2 \sqrt{2}\lambda\mu'f'f\mp \sqrt{2}\lambda\mu f''f+\partial_n w_3^\pm(s,0)\pm \partial_n w_4^\pm(s,0)+t\left(\pm \sqrt{2}k\lambda\mu \pm \sqrt{2}\lambda\mu''f \right.   \\
    &   &   \\
    &   &\left.\pm \sqrt{2}\lambda\mu f''\pm 2 \sqrt{2}\lambda\mu'f'+2k^2-2\mu''\mu^{-1}+2(\mu')^2\mu^{-2}\mp \sqrt{2}k^2\lambda\mu f\mp \sqrt{2}k\Delta_1 \right.   \\
    &   &   \\
    &   &\left.+(w_2^\pm)_{tt}(s,0) +(w_4^\pm)_{tt}(s,0) \right)+\frac{t^2}{2}\left(\mp \sqrt{2}\lambda \mu''\pm 2\sqrt{2}k^2 \lambda\mu+\sum_{i=0}^{4}
    (w_i^\pm)_{ttt}(s,0) \right)   \\
    &   &\\
    & &+\mathcal{O}\left(t^3 \|(W_2^\pm)_{tttt}\|_{L^\infty(\Omega^\pm)}
    \right),
\end{array}
\]
for $s\in [0,\ell)$ and $0\leq \mp t\leq C \left[\ln\left(\ln
\frac{1}{\lambda} \right) \right]\left( \ln
\frac{1}{\lambda}\right)^{-1}$.

By the above relations and (\ref{eqw3third})-(\ref{eqRw3}), we
deduce that
\[
\begin{array}{rcl}
 (u_2-W_2^\pm)_t & = & -R(f,w_3^\pm)\lambda\mu\mp\partial_nw_4^\pm(s,0)+t\left(-2k\mu^{-1}\mu''f+2(\mu')^2k \mu^{-2}f+2k^3 f-b_1k f'' \right. \\
    &   &   \\
    &   &  \left. -b_2 k \mu^{-1}\mu'f'-b_3\lambda^{-1}\mu^{-1}\Delta_1k^2-b_4 k^3 f-b_5\lambda^{-1}\mu^{-1}\Delta_1
    E_1k-kR(f,w_3^\pm)\right. \\
   &   &   \\
    &   & \left. -
    (w_2^\pm)_{tt}(s,0)-(w_4^\pm)_{tt}(s,0)\right)-\frac{t^2}{2}\sum_{i=0}^{4}(w_i^\pm)_{ttt}(s,0)  \\
    &   &   \\
   &  & +\mathcal{O}\left(t^3 \|(W_2^\pm)_{tttt}\|_{L^\infty(\Omega^\pm)} \right)+\lambda\mu\sum_{i=1}^{2}\left(|\Delta_i|+|E_i|+1
    \right)^2 \mathcal{O}(e^{-c\lambda\mu|t-f|}),
\end{array}
\]
for $s\in [0,\ell)$ and $c \left[\ln\left(\ln \frac{1}{\lambda}
\right) \right]\left( \ln \frac{1}{\lambda}\right)^{-1}\leq \mp
t\leq C \left[\ln\left(\ln \frac{1}{\lambda} \right) \right]\left(
\ln \frac{1}{\lambda}\right)^{-1}$.

Before proceeding further, let us complete the estimates for the
harmonic functions $w_i^\pm$, $i=0,\cdots,4$, by estimating
$w_4^\pm$. It follows from (\ref{eqw4third}), (\ref{eqmuUpper}),
(\ref{eqmuLower}), (\ref{eqfUpper}), (\ref{eqDelta1E1estim}) and
(\ref{eqDelta2E2estim}) that
\[
\left|w_4^\pm \right|\leq C \left(\ln \frac{1}{\lambda}
\right)^{-2}\ \ \ \textrm{on}\ \ \gamma.
\]
In turn, by the same argument leading to (\ref{eqw2holder}), we
find that
\begin{equation}\label{eqw4holder}
\|w_4^\pm\|_{C^{m}(\bar{\Omega}^\pm)}\leq
C_m\left(\ln\frac{1}{\lambda} \right)^{-2},\ \ m\geq 0.
\end{equation}

As before, it follows readily that
\begin{equation}\label{eq(u2-W2)tfinal}
\left|\left(u_2-W_2^\pm\right)_t (s,\mp t) \right|\leq  C
\left[\ln\left(\ln \frac{1}{\lambda} \right) \right]^3\left( \ln
\frac{1}{\lambda}\right)^{-2},
\end{equation}
for $s\in [0,\ell)$ and $M  \left[\ln\left(\ln \frac{1}{\lambda}
\right) \right]\left( \ln \frac{1}{\lambda}\right)^{-1}\leq t\leq
2M \left[\ln\left(\ln \frac{1}{\lambda} \right) \right]\left( \ln
\frac{1}{\lambda}\right)^{-1}$, provided that $\lambda>0$ is
sufficiently small (having increased the value of $M$ if
necessary).

\subsection{The estimate for $\Delta\left(u_2-W_2^\pm\right)=\Delta u_2$}
It follows immediately from (\ref{equ2}),
(\ref{eqasymptoticU0U0s}), Lemmas \ref{lemphi1}-\ref{lemphi2},
(\ref{eqmuLower}), (\ref{eqfUpper}) and Proposition
\ref{proRemainderInner} that
\begin{equation}\label{eqLaplacianu2}
\left|\Delta u_2(y)\right|\leq C \left[\ln\left(\ln
\frac{1}{\lambda} \right) \right]^2\left( \ln
\frac{1}{\lambda}\right)^{-1},
\end{equation}
if \begin{equation}\label{eqinterpolRegion}M \left[\ln\left(\ln
\frac{1}{\lambda} \right) \right]\left( \ln
\frac{1}{\lambda}\right)^{-1} \leq\textrm{dist}(y,\gamma)\leq 2M
\left[\ln\left(\ln \frac{1}{\lambda} \right) \right]\left( \ln
\frac{1}{\lambda}\right)^{-1},
\end{equation}
for $\lambda>0$  sufficiently small (having possibly increased
$M$, since in this region $\lambda^2e^{u_2}$ is of order
$\left(\ln \frac{1}{\lambda} \right)^{-cM}$ with $c>0$ independent
of \emph{both} $M$ and $\lambda$).
\subsection{The global approximate solution $u_{ap}$}
We are now in position to construct a smooth global approximate
solution to the problem (\ref{eqEqradialmain}), by interpolating
between the inner and outer approximations in the region described
by (\ref{eqinterpolRegion}), and to be able to estimate the
remainder that is left by it in the equation. As expected, this
task will require us to use some cutoff functions.

Consider a fixed smooth cutoff function such that
\begin{equation}\label{eqetaCutoff}
\eta(\tau)=\left\{\begin{array}{ll}
                 0, & |\tau|\leq 1, \\
                   &   \\
                 1, & |\tau|\geq 2.
               \end{array}
\right.
\end{equation}
Then, let
\[
\eta_\lambda(t)=\eta\left( \frac{t}{M \left[\ln\left(\ln
\frac{1}{\lambda} \right) \right]\left( \ln
\frac{1}{\lambda}\right)^{-1}}\right).
\]

We can now
 define our global approximate solution, using the local
coordinates $(s,t)$, as
\[
u_{ap}(y)=\left\{ \begin{array}{ll}
                  u_2, & \textrm{dist}(y,\gamma)\leq M \left[\ln\left(\ln
\frac{1}{\lambda} \right) \right]\left( \ln
\frac{1}{\lambda}\right)^{-1}, \\
                    &   \\
                  u_2+\eta_\lambda(t)\left(W_2^\pm-u_2 \right), & \mp t\in \left(M \left[\ln\left(\ln
\frac{1}{\lambda} \right) \right]\left( \ln
\frac{1}{\lambda}\right)^{-1},2M \left[\ln\left(\ln
\frac{1}{\lambda} \right) \right]\left( \ln
\frac{1}{\lambda}\right)^{-1}\right),  \\
                    &   \\
                  W^\pm_2,  & y\in \Omega^\pm \ \textrm{and}\
                  \textrm{dist}(y,\gamma)\geq 2M \left[\ln\left(\ln
\frac{1}{\lambda} \right) \right]\left(\ln
\frac{1}{\lambda}\right)^{-1}.
                \end{array}
\right.
\]

The main result of this section is the following.
\begin{proposition}\label{proRemainderGlobal}
We can choose a large $M$ such that the global approximation
$u_{ap}$ satisfies
\[
\|\Delta u_{ap}+\lambda^2 e^{u_{ap}} \|_{L^\infty(\Omega)}\leq C
\left[\ln\left(\ln \frac{1}{\lambda} \right) \right]^2\left( \ln
\frac{1}{\lambda}\right)^{-1},
\]
if $\lambda>0$ is sufficiently small.
\end{proposition}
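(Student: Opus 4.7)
The plan is to verify the bound separately on each of the three pieces defining $u_{ap}$. In the inner region $\{\mathrm{dist}(y,\gamma)\leq M[\ln\ln(1/\lambda)](\ln(1/\lambda))^{-1}\}$, where $u_{ap}=u_2$, the required estimate is exactly the content of Proposition \ref{proRemainderInner}. In the outer region, where $u_{ap}=W_2^\pm$, Proposition \ref{proRemainderOuter} yields a remainder of size $(\ln(1/\lambda))^{-cM}$ with $c$ independent of $M$, which can be made much smaller than $[\ln\ln(1/\lambda)]^2(\ln(1/\lambda))^{-1}$ by choosing $M$ sufficiently large.

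The real work takes place in the interpolation strip, where $u_{ap}=(1-\eta_\lambda)u_2+\eta_\lambda W_2^\pm$. Using that $W_2^\pm$ is harmonic in $\Omega^\pm$, a direct computation gives
\[
\Delta u_{ap}=(1-\eta_\lambda)\Delta u_2+(\Delta\eta_\lambda)(W_2^\pm-u_2)+2\nabla\eta_\lambda\cdot\nabla(W_2^\pm-u_2).
\]
Since $\eta_\lambda$ depends only on the normal coordinate $t$ and its argument is scaled by $M[\ln\ln(1/\lambda)](\ln(1/\lambda))^{-1}$, one has $|\eta_\lambda'|\leq C\ln(1/\lambda)/\ln\ln(1/\lambda)$ and $|\eta_\lambda''|\leq C[\ln(1/\lambda)/\ln\ln(1/\lambda)]^2$; moreover $\nabla\eta_\lambda$ is in the direction of $n$, so only $(W_2^\pm-u_2)_t$ enters the gradient term. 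Inserting the matching estimates (\ref{equ2-W2final}) and (\ref{eq(u2-W2)tfinal}), together with the interior Laplacian bound (\ref{eqLaplacianu2}), each of the three contributions is of size $C[\ln\ln(1/\lambda)]^2(\ln(1/\lambda))^{-1}$. The exponential term $\lambda^2 e^{u_{ap}}$ in the strip is handled as in the proof of Proposition \ref{proRemainderOuter}: since $u_{ap}$ is a convex combination of $u_2$ and $W_2^\pm$, both of which are bounded from above by $2\ln(1/\lambda)-cM\ln\ln(1/\lambda)$ on the strip (by the matching at its inner boundary $|t|=M[\ln\ln(1/\lambda)](\ln(1/\lambda))^{-1}$ and the estimate of $\Gamma_2 W_\gamma^\pm$), we obtain $\lambda^2 e^{u_{ap}}\leq C(\ln(1/\lambda))^{-cM}$, again negligible for $M$ large.

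The main obstacle is the middle term $(\Delta\eta_\lambda)(W_2^\pm-u_2)$: since $\Delta\eta_\lambda$ loses two powers of $\ln(1/\lambda)/\ln\ln(1/\lambda)$, a mere $C^0$-matching of size $(\ln(1/\lambda))^{-1}$ would fail catastrophically. What rescues the argument is precisely the two extra orders of matching engineered through the careful choice of $\phi_1,\phi_2$ and $w_i^\pm$ in the preceding sections: the $C^0$-difference is in fact of order $[\ln\ln(1/\lambda)]^4(\ln(1/\lambda))^{-3}$ by (\ref{equ2-W2final}), and the $C^1$ (normal) difference is of order $[\ln\ln(1/\lambda)]^3(\ln(1/\lambda))^{-2}$ by (\ref{eq(u2-W2)tfinal}). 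These two refinements balance the lost powers from $\eta_\lambda',\eta_\lambda''$ exactly, giving a uniform bound $C[\ln\ln(1/\lambda)]^2(\ln(1/\lambda))^{-1}$ on the strip. Combining the three regional estimates, and fixing $M$ large enough that the outer and exponential contributions are dominated, yields the proposition.
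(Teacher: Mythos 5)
Your proposal follows essentially the same route as the paper: decompose into the three regions, invoke Propositions \ref{proRemainderInner} and \ref{proRemainderOuter} for the pure inner and outer zones, and in the interpolation strip expand $\Delta u_{ap}$ using harmonicity of $W_2^\pm$, then feed in the cutoff derivative bounds together with the matching estimates (\ref{equ2-W2final}), (\ref{eq(u2-W2)tfinal}), (\ref{eqLaplacianu2}) and the exponential decay observed below (\ref{eqinterpolRegion}). The bookkeeping of the competing powers of $\ln(1/\lambda)$ and $\ln\ln(1/\lambda)$ is carried out correctly and matches the paper's argument.
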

\begin{proof}
By virtue of Propositions \ref{proRemainderInner} and
\ref{proRemainderOuter}, it remains to consider the intermediate
regions in (\ref{eqinterpolRegion}). There, using
(\ref{eqgradFermi}) and the fact that $W_2^\pm$ are harmonic, the
remainder under consideration reduces to
\[\Delta u_2+(\Delta
\eta_\lambda)\left(W_2^\pm-u_2 \right)+2\partial_t \eta_\lambda
\left(W_2^\pm-u_2 \right)_t-\eta_\lambda \Delta u_2 +\lambda^2
e^{u_2+\eta_\lambda (W_2^\pm-u_2)}.
\]
The sought after estimate now follows readily by using
(\ref{equ2-W2final}), (\ref{eq(u2-W2)tfinal}),
(\ref{eqLaplacianu2}), the direct estimates
\[
\left|\partial_t \eta_\lambda \right|\leq C \left[\ln\left(\ln
\frac{1}{\lambda} \right) \right]^{-1}\left( \ln
\frac{1}{\lambda}\right),\ \ \left|\Delta \eta_\lambda \right|\leq
C \left[\ln\left(\ln \frac{1}{\lambda} \right) \right]^{-2}\left(
\ln \frac{1}{\lambda}\right)^2\ \ (\textrm{keep in mind
(\ref{eqS(u)})}),
\]
and the comment below (\ref{eqinterpolRegion}) (to estimate the
last term).
\end{proof}

 \textbf{Acknowledgments.} The idea of using conformal invariance in order to get the existence of the curve $\gamma$ in Proposition \ref{proW} is borrowed from a personal communication of the author with M. Kowalczyk.

\end{document}